\tikzstyle{every state}=[minimum size=12pt,inner sep=0pt]
\newcommand{\resp}{\textit{resp. }}
\newcommand{\ie}{\textit{i.e. }}
\DeclareMathOperator{\lcm}{lcm}
\newcommand{\dz}{\mathfrak d}
\newcommand{\aut}[1]{{\mathcal #1}}
\newcommand{\auta}{\aut{A}}
\newcommand{\mot}[1]{{\bm{#1}}}
\newcommand{\alphab}{\Sigma}
\newcommand{\auttuple}{(Q,\alphab,\delta,\rho)}
\newcommand{\pres}[1]{\langle#1\rangle}
\newcommand{\presc}[1]{\langle#1\rangle_c}
\newcommand{\sgn}[1]{\text{sgn}(#1)}
\newcommand{\sgnpi}[1]{\text{sgn}_{\pi}(#1)}
\newcommand{\ordr}[1]{\left|#1\right|}
\newcommand{\card}[1]{\left|#1\right|}
\newcommand{\ssp}{\sigma}
\newcommand{\ttp}{\tau}
\newcommand{\os}{\ordr{\ssp}}
\newcommand{\ot}{\ordr{\ttp}}
\newcommand{\N}{{\mathbb N}}
\newcommand{\Z}{{\mathbb Z}}
\newcommand{\GS}{S_k}
\newcommand{\GA}{A_k}
\newcommand{\idp}{e}
\newcommand{\dualp}[1]{\overline{#1}}
\newcommand{\GSk}[1]{S_{#1}}
\newcommand{\autpd}[2]{\raisebox{#2}{
	 \scalebox{#1}{\begin{tikzpicture}[->,>=latex]
	\node[state,black]  at (0,-1) (1)  {$1$};
	\node[] (c) [right of=1]  {};
	\node[state,black] (2) [right of=c]  {$2$};
	\path
		 (1) edge[bend left] node[above] {\LARGE{$\sigma$}} (2)
		 (2) edge[bend left] node[below] {\LARGE{$\tau$}} (1)
		 ;

	\end{tikzpicture}
	}
	}
}
\begin{document}

\mainmatter 

\title{An analogue to Dixon's theorem\\ for automaton groups}
\titlerunning{Dixon's theorem for automaton groups}

\author{Thibault Godin\thanks{The author is supported by the French
\emph{Agence Nationale pour la~Recherche},
through the Project~$\mathbf{MealyM}$ ANR-JCJC-12-JS02-012-01.}}
\authorrunning{Th. Godin}
\institute{ IRIF,
    UMR 8243 CNRS, Univ. Paris Diderot
    Paris France\\
\email{godin@liafa.univ-paris-diderot.fr}
}

\maketitle
  \abstract{Dixon's famous theorem states that the group generated by two random permutations of a finite set is generically either the whole symmetric group or the alternating group. In the context of random generation of finite groups this means that it is hopeless to wish for a uniform distribution -- or even a non-trivial one -- by drawing random permutations and looking at the generated group.\\
 Mealy automata are a powerful tool to generate groups, including all finite  groups and many interesting infinite ones, whence the idea of generating random finite groups by drawing random Mealy automata.\\
 In this paper we show that, for a special class of Mealy automata that generate only finite groups, the distribution is far from being uniform since the obtained groups are generically a semi-direct product between a direct product of alternating groups and a group generated by a tuple of transpositions.}
  \keywords{Random~Generation, Finite~Groups, Automaton~Groups, Random~Permutations.}

\newpage

\section{Introduction}
The problem of random generation of finite groups was introduced by Netto in 1882~\cite{Net}, who conjectured that two random elements of the symmetric group on $k$ elements generate either the symmetric group or the alternating group with high probability when $k$ goes to infinity. This was confirmed by Dixon in~1969~\cite{Dix}. In this paper we study a different approach for this problem, using \emph{Mealy automata}. More precisely, instead of drawing generators of a group, we draw a Mealy automaton next used to generate a group. In  specific  classes of automata we prove  analogues to Dixon theorem: the limiting probability distributions on groups is formed by atoms of total weight 1.

In all the paper we denote by $\GS$ the \emph{symmetric group} over $k$ elements, \ie the group of the bijections of the set $\{1, \hdots k\}$ (or equivalently the set of permutations on $k$ symbols equipped with the multiplication), $\os$ the \emph{order} of a permutation~$\ssp \in \GS$, and~$\sgn{\ssp}$ its \emph{signature}. Moreover we say that $\GA$ is the \emph{alternating group}, that is the group of permutations of signature 1. If $g_1,\hdots, g_i$ are elements of a group $G$ then $\pres{g_1,\hdots, g_i}$, called the \emph{group generated} by $g_1,\hdots, g_i$, is the smallest subgroup of $G$ that contains $g_1,\hdots g_i$. Finally we denote by~$\leq$ the subgroup relation, by $\rtimes$ the (inner) semidirect product, by $\pi^\rho  = \rho^{-1}\pi\rho$ the \emph{conjugate} of $\pi$ by $\rho$, by  $\idp$ the trivial permutation, and by $\lcm$ (\resp $\gcd$) the function \emph{lowest common multiple} (\resp smallest common multiple).

\bigskip

In 1969, Dixon proved  the following theorem:
\begin{theorem}[Dixon \cite{Dix}]\label{thm:dixon}
Let $\ssp$ and $\ttp$ be two random permutations in~$\GS$. Then
 \[\lim_{k \to \infty} \mathbb{P}\left(\pres{\ssp, \ttp}= \GS \text{ or } \GA \right) = 1  \:.\]
\end{theorem}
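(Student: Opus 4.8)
\medskip
\noindent\emph{Proof proposal.} The plan is to prove that the proportion of \emph{bad} pairs, those for which $\pres{\ssp,\ttp}$ is neither $\GS$ nor $\GA$, tends to $0$; Theorem~\ref{thm:dixon} then follows. Every permutation group on $\{1,\dots,k\}$ is intransitive, transitive but imprimitive, or primitive, so a bad group $G=\pres{\ssp,\ttp}$ lies in one of these three classes (and in the last one $G\neq\GS,\GA$). Since $\mathbb{P}(\ssp\in H)\,\mathbb{P}(\ttp\in H)=(\card{H}/k!)^{2}$ for every fixed $H\leq\GS$, in the first two cases I would bound the probability by $\sum_{H}(\card{H}/k!)^{2}$, the sum ranging over the subgroups $H$ forced to contain $G$, and in the third case I would organise the count by $\GS$-conjugacy classes; it then suffices to check that each of the three bounds vanishes.

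In the intransitive case $G$ stabilises setwise some subset of size $m$, $1\leq m\leq k-1$, and the setwise stabiliser of such a subset is a copy of $\GSk{m}\times\GSk{k-m}$ of order $m!(k-m)!$; as there are $\binom{k}{m}$ such subsets and $m!(k-m)!/k!=\binom{k}{m}^{-1}$, the contribution is
\[
\sum_{m=1}^{k-1}\binom{k}{m}\Bigl(\tfrac{m!(k-m)!}{k!}\Bigr)^{2}=\sum_{m=1}^{k-1}\binom{k}{m}^{-1}\leq\frac{2}{k}+\frac{2}{k-1},
\]
after separating the two extreme terms from the at most $k$ middle ones (each $\leq\binom{k}{2}^{-1}$), hence it is $O(1/k)$. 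In the imprimitive case $G$ preserves a partition of $\{1,\dots,k\}$ into $b$ blocks of size $d$ with $bd=k$, $1<d<k$, whose stabiliser is $\GSk{d}\wr\GSk{b}$ of order $(d!)^{b}b!$; there are $k!/\bigl((d!)^{b}b!\bigr)$ such partitions, so the contribution is $\sum_{d\mid k,\ 1<d<k}(d!)^{b}b!/k!$, and since $k!/(d!)^{b}=\prod_{j=1}^{b}\binom{jd}{d}\geq(b!)^{d}$ (using $\binom{jd}{d}\geq j^{d}$), each summand is at most $(b!)^{1-d}\leq 2^{-\Omega(\sqrt{k})}$ (as $\max\{b,d\}\geq\sqrt{k}$), so the sum over the $O(k)$ admissible $d$ tends to $0$.

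The primitive case is the crux. Here $G$ is primitive with $G\neq\GS,\GA$, so by a classical theorem of Bochert its index satisfies $[\GS:G]\geq\lfloor(k+1)/2\rfloor!$. Organising these subgroups by $\GS$-conjugacy, a class of such a $G$ has $[\GS:N_{\GS}(G)]$ members, and at most $\card{G}^{2}$ pairs lie in any given one of them, so (using $G\leq N_{\GS}(G)$) this class accounts for at most
\[
\frac{[\GS:N_{\GS}(G)]\,\card{G}^{2}}{(k!)^{2}}=\frac{\card{G}^{2}}{\card{N_{\GS}(G)}\,k!}\leq\frac{\card{G}}{k!}=\frac{1}{[\GS:G]}\leq\frac{1}{\lfloor(k+1)/2\rfloor!}
\]
of all pairs. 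As the number of primitive permutation groups of degree $k$, hence of such conjugacy classes, is $k^{O(\log k)}$, the total primitive contribution is at most $k^{O(\log k)}/\lfloor(k+1)/2\rfloor!\to 0$. Summing the three estimates gives the theorem. I expect the primitive case to be the main obstacle: it combines two inputs of different nature — a lower bound on the index of a primitive group other than $\GS$ and $\GA$, and an upper bound on the number of such groups up to conjugacy — and the estimate closes only because $\lfloor(k+1)/2\rfloor!$, of size $e^{\Theta(k\log k)}$, outgrows the count $k^{O(\log k)}=e^{O(\log^{2}k)}$ of primitive groups of degree $k$; the intransitive and imprimitive cases are, by contrast, routine counting.
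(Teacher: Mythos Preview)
The paper does not contain a proof of Theorem~\ref{thm:dixon}: it is quoted as a result of Dixon~\cite{Dix} and used throughout as a black box (see the discussion following the statement, and its invocations in the proofs of Theorems~\ref{thm:prob}, \ref{thm:2}, \ref{thm:gene} and Propositions~\ref{prop:linear}, \ref{prop:converging}). There is therefore nothing in the paper to compare your argument against.

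That said, your sketch is a faithful outline of Dixon's original strategy: split bad pairs according to whether $\pres{\ssp,\ttp}$ is intransitive, transitive imprimitive, or primitive but not $\GA$ or $\GS$, and bound each contribution via the union bound over the relevant maximal overgroups. Your treatment of the first two cases is correct and essentially the textbook one. For the primitive case, note that Dixon's 1969 argument did not use the estimate that there are only $k^{O(\log k)}$ conjugacy classes of primitive groups of degree $k$ (a considerably later and deeper result); he combined Bochert's index bound with a cruder count based on the observation that a primitive group is generated by at most $\log_2 k!$ elements, which already suffices to beat $\lfloor(k+1)/2\rfloor!$. Your version is valid but imports more machinery than Dixon needed; if you want to match the original, replace the $k^{O(\log k)}$ input by the elementary observation that the number of subgroups of $\GS$ of order at most $M$ is at most $M^{\,1+\log_2 k!}$, and feed in Bochert's bound $M\le k!/\lfloor(k+1)/2\rfloor!$.
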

In other terms the group generated by two random permutations is generically the biggest possible. The case where the generated group is $\GA$ occurs when both  permutations have signature 1.\medskip

The asymptotic in Theorem~\ref{thm:dixon} has been gradually improved  to  $1-1/k -1/k^2-4/k^3-23/k^4-171/k^5 \hdots$\footnote{Sequence A113869 from OEIS \cite{OEIS}.} by Bovey and Williamson~\cite{BoWi},  Babai~\cite{Bab},  and  Dixon himself~\cite{Dix05}. This theorem has also been extended to any finite simple group~\cite{KaLu,LiSh}: pick two elements in a finite simple group, they generate the whole group with high probability.\medskip

This leaves open the problem of finding a suitable way to generate random finite groups. In this paper we examine a new model, that is, generating random finite groups via drawing random Mealy automata. More precisely we restrict ourselves to a class of automaton where every generated group is finite, the class of \emph{automata with cycles without exit}~\cite{Ant,Rus,KlPi}. Random generation of automata in this class has been studied by De Felice and Nicaud in~\cite{DFeNi}. \\ In Section~\ref{sec-Mealy} we recall the definitions  of Mealy automata, introduce the class of Mealy automata that we consider in the rest of the paper, and prove some properties on the (finite) groups they generate. In Section~\ref{sec-2case} we show  an analogue to Dixon's Theorem for cycle automata with 2 states, namely that, generically  \[ \pres{\autpd{0.5}{-.9em}}=  \begin{cases} 

\text{ either } \GS \times \GS \:, \\ 

 \text{ or }  (\GA\times \GA)\rtimes \pres{(\pi,\pi)}, \text{ with }\: \pi^2 = \idp \text{ and } \pi \neq \idp\\
 \text{ or }  \GA\times \GA \:.
\end{cases} \] In Section~\ref{sec-genecase} we extend this theorem to   cyclic automata with any number of states. Section~\ref{sec:ccl} is dedicated to the conclusion and some perspectives.
Due to space constraints, several proofs are omitted but can be found in the appendix.

\section{Mealy Automata}\label{sec-Mealy}
Mealy automata have been introduced by Mealy in~\cite{Mea} but have  been widely used in (semi)group theory since Glushkov~\cite{Glu}. They have given numerous interesting groups, the most famous probably being the Grigorchuk group, that is an infinite torsion group with intermediate growth, solving both the Burnside problem and the Milnor problem~\cite{Gri:burnside,Gri:milnor}, along with many others. For a more complete introduction to the topic  we refer the reader to the survey of Nekrashevych~\cite{Nek} or to the chapter of Bartholdi and Silva~\cite{BaSi}.\bigskip

A  \emph{Mealy automaton} is a complete deterministic letter-to-letter transducer $\auta = \auttuple$ where $Q$ and $\alphab$ are finite sets respectively called the \emph{stateset} and the \emph{alphabet}, and $\delta= (\delta_i : Q \to Q)_{i \in \alphab}$, $\rho = (\rho_q : \alphab \to \alphab )_{q \in Q}$ are respectively called the \emph{transition} and \emph{production} functions. These functions can be extended to words as follows:  see~\(\aut{A}\) as an automaton with  input and  output tapes, thus
defining mappings from input words over~$\Sigma$ to output words
over~$\Sigma$.
Formally, for~\(q\in Q\), the map~$\rho_q\colon\Sigma^* \rightarrow \Sigma^*$,
extending~$\rho_q\colon\Sigma \rightarrow \Sigma$, is defined recursively by:
\begin{equation}
\forall i \in \Sigma, \ \forall \mot{s} \in \Sigma^*, \qquad
\rho_q(i\mot{s}) = \rho_q(i)\rho_{\delta_i(q)}(\mot{s}) \:.
\end{equation}\\
We can also extend the map $\rho$ to words of states $\mot{u} \in Q^*$ by composing the production functions associated with the letters of $\mot{u}$:

\begin{equation}
 \forall q \in Q, \ \forall \mot{u} \in Q^*, \qquad
\rho_{q\mot{u}} = \rho_{\mot{u}}\circ \rho_{q}\:.
\end{equation}\\
A Mealy automaton is said to be \emph{invertible} whenever $\rho_q$ is a permutation of the alphabet for every $q \in Q$. It is called \emph{reversible} whenever $\delta_i$ is a permutation of the stateset for every $i \in \alphab$. Moreover an automaton is said to be \emph{bireversible} whenever it is reversible (\ie every input letter induces a permutation of the stateset) and  every output letter induces a permutation of the stateset.

 Examples of such automata are depicted in Figures~\ref{fig:mealy1} and~\ref{fig:mealy2}.
\bigskip
\begin{figure}[ht]%
\centering
\begin{minipage}{.41\textwidth}%
	\centering
	\begin{tikzpicture}[->,>=latex,node distance=15mm]
	\node[state] (1) {\(x\)};
	\node[state] (2) [right of=1] {\(y\)};

	\path 
      (1) edge[loop left] node[left]{\(\begin{array}{c} 1|2\\2|1\end{array}\)} (1)
      (2) edge node[below]{\(\begin{array}{c} 1|1\\2|2\end{array}\)} (1)
;
	\end{tikzpicture}
\caption{An invertible non-reversible Mealy automaton (generating $K_4~=~\Z/2\Z \times \Z/2\Z$).}%
\label{fig:mealy1}
\end{minipage}%
\hspace*{2mm}
\begin{minipage}{.43\textwidth}%
	\centering
	\begin{tikzpicture}[->,>=latex,node distance=15mm]
	\node[state] (1) {\(x\)};
	\node[state] (2) [right of=1] {\(y\)};

	\path 
      (1) edge[loop left] node[left]{\(\begin{array}{c} 1|1\end{array}\)} (1)
      (2) edge[bend left]  node[below]{\(\begin{array}{c} 2|1\end{array}\)} (1)
      (1) edge[bend left]  node[above]{\(\begin{array}{c} 2|2\end{array}\)} (2)
      (2) edge[loop right]  node[right]{\(\begin{array}{c} 1|1  \end{array}\)} (2)
;
	\end{tikzpicture}
\caption{A reversible non-invertible Mealy automaton (generating an infinite semigroup).}%
\label{fig:mealy2}%
\end{minipage}%
\end{figure}
\medskip

The production functions $\rho_q : \Sigma^* \to \Sigma^* $ of an automaton $\auta$ generate a semigroup.
Whenever $\auta$ is   invertible, one can define the  \emph{group generated by $\auta$}: \[\pres{\auta}:= \pres{\rho_q | q \in Q} = \pres{\rho_{\mot{u}} : \Sigma^* \to \Sigma^* | \mot{u} \in Q^*}\:.\] 
\\
The problem of deciding whether an  automaton semigroup is finite was found undecidable by Gillibert~\cite{Gil}. For automaton groups the problem is still open. However some classes of automata are known to generate only finite or infinite groups. For instance automata that are invertible and  reversible, but not bireversible generate only infinite groups~\cite{AKLMP12}; while Antonenko~\cite{Ant} and Russyev~\cite{Rus} independently proved that automata with cycle without exit, \ie automata where the underlying digraph consists in a directed  graph where each cycle is a dead end (or equivalently a directed acyclic graph with eventually cycles and loops on the leaves), generate only finite groups, regardless of the production functions. This class is maximal in the sense that, for any automaton out of this class, there exist   production functions such that the generated group is infinite~\cite{KlPi}.\\
 In this paper we focus on the simplest of these later automata, namely those where the underlying digraph consists in a single cycle. We  call the later \emph{cyclic automata}, and draw the transitions $x \xrightarrow{~\rho_x~} y $ instead of $ x \xrightarrow{i\mid \rho_x(i)} y $ when there is no ambiguity, see Figures~\ref{fig:cyclic2} and~\ref{fig:cyclic3} for examples.\\

For this class, the generated  groups are  finite, according to Antonenko and Russyev, but the simplicity of the structure allows us to be more precise: 

\begin{proposition}\label{prop:subgp}
Let $\auta$ be an $n$-state $k$-letter cyclic automaton. Then $\pres{\auta} \leq \GS^n$.
\end{proposition}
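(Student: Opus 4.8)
The plan is to understand how a cyclic automaton acts on words over $\Sigma$ and to decompose this action according to the first letter. Let $\auta$ have stateset $Q = \{x_1, \dots, x_n\}$ arranged in a single cycle, so that $\delta_i(x_j)$ is some state of the automaton for each letter $i$; the key structural feature is that the underlying digraph is a single cycle, hence from any state, reading one letter moves us along the cycle. I would first fix notation: write $\rho_{x_j}$ for the permutation of $\Sigma = \{1, \dots, k\}$ induced by state $x_j$, and observe that the group $\pres{\auta}$ is generated by the maps $\rho_{x_1}, \dots, \rho_{x_n}$ acting on $\Sigma^*$.

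Next I would analyze the action of a generator $\rho_{x_j}$ on a word $i\mot{s}$ using the recursion $\rho_{x_j}(i\mot{s}) = \rho_{x_j}(i)\,\rho_{\delta_i(x_j)}(\mot{s})$. The crucial point is that, because the automaton is cyclic, reading a letter $i$ from $x_j$ lands on a definite successor state on the cycle, and iterating, the state reached after reading a length-$\ell$ word is determined by the path through the cycle. I would argue that the action of $\pres{\auta}$ on $\Sigma^\ell$ (words of length $\ell$) already determines the action on all of $\Sigma^*$, or more precisely, that it suffices to bound the action on a suitable finite level. The cleanest route: show that each generator $\rho_{x_j}$, restricted to words beginning with a fixed letter, permutes the first letter by $\rho_{x_j}$ and then acts on the remaining suffix by one of the generators $\rho_{x_1}, \dots, \rho_{x_n}$ (namely $\rho_{\delta_i(x_j)}$). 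Hence the action of the whole group on $\Sigma^*$ is built by ``stacking'' copies of the same finite set of permutations.

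From this I would extract the embedding. Consider the $n$ generators as the ``building blocks''; I claim the map sending $g \in \pres{\auta}$ to the $n$-tuple recording, for each $j$, how $g$ permutes $\Sigma$ at the ``$x_j$-labelled positions'' (or an appropriate analogue) is an injective group homomorphism into $\GS^n$. Concretely, one can set up a section/retraction: the automaton with its $n$ states gives, for each starting state $x_j$, a permutation $\rho_{\mot{u}}$ when we feed a word $\mot{u} \in Q^*$, and since there are only $n$ states, the relevant data is an $n$-tuple of elements of $\GS$. I would make this precise by an induction on word length showing that if $g$ acts trivially on $\Sigma$ from every state then $g$ acts trivially on $\Sigma^*$ — using that the states reached are again among the $n$ states, so triviality propagates level by level — which gives injectivity of the tuple map; and closure under the group operations is a direct computation from the recursion defining $\rho$ on words of states.

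The main obstacle I expect is pinning down the correct index set for the $n$ coordinates and verifying that the tuple map is a homomorphism: one must be careful that composing two automaton elements corresponds coordinatewise to composing the permutations in $\GS$, which requires the cyclic structure to ensure that ``the state you are in after reading a prefix'' behaves consistently under composition (this is where reversibility-type arguments or a careful bookkeeping of the cycle positions come in). Once the homomorphism is set up, injectivity via the level-by-level propagation argument is routine, and the codomain is manifestly $\GS^n$, giving $\pres{\auta} \leq \GS^n$.
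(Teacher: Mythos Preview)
Your outline is heading in the right direction but never isolates the one fact that makes this proposition trivial, and as a result the argument stays vague where the paper's is a two-line computation.

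The crucial point, which you circle around but do not state, is that in a cyclic automaton the transition $\delta_i(q)$ is \emph{independent of the letter $i$}: the underlying digraph is a single directed cycle, so every edge out of $q$ goes to the same successor. Up to renaming, $\delta_i(q)=q+1\bmod n$ for all $i$. Consequently the state reached after reading any word of length $\ell$ from $q$ is simply $q+\ell\bmod n$, regardless of the letters read. This immediately gives
\[
\rho_q(s_0 s_1\cdots s_\ell)=\rho_q(s_0)\,\rho_{q+1}(s_1)\cdots\rho_{q+\ell}(s_\ell),
\]
so $\rho_q$ is exactly the tuple $(\rho_q,\rho_{q+1},\dots,\rho_{q+n-1})\in\GS^n$ acting coordinatewise on positions modulo $n$. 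Composition of generators is then visibly componentwise composition in $\GS^n$, and the embedding is explicit.

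Because you never make this independence explicit (you write ``acts on the remaining suffix by $\rho_{\delta_i(x_j)}$'' as though this depended on $i$), you are forced to propose machinery --- a level-by-level injectivity argument, a homomorphism whose coordinates you cannot name, and ``reversibility-type arguments'' --- none of which is needed. The last is in fact a red herring: a cyclic automaton with $n>1$ states is \emph{not} reversible, since every $\delta_i$ is the same non-bijective constant-shift on $Q$. Once you write down $\delta_i(q)=q+1\bmod n$, the whole proof collapses to the displayed line above.
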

\begin{proof}
Let  $\auta = (Q,\Sigma, \delta, \rho )$, with $Q=\{0,1, \hdots n-1\}$ and $|\Sigma| = k$. Consider the action of some state $q$ on a word $\mot{s}=s_0s_1\hdots s_\ell$, $s_i \in \Sigma$. Since the underlying graph is a cycle, up to renaming the states, we have $\delta_i(q)= q+1 \text{ mod }  n$ for all $i \in \Sigma$. So $\rho_q(\mot{s})= \rho_q(s_0)\hdots \rho_{q + \ell \text{ mod } n}(s_\ell) $. Hence $\rho_q$ acts on $\mot{s}$ like the tuple \[(\rho_q, \rho_{q+1 \text{ mod } n}, \hdots, \rho_{q + n-1 \text{ mod } n})\:.\] The same holds for the other states, hence  $\pres{\auta} \leq \GS^n$.
\qed \end{proof}\medskip

\begin{figure}[ht]%
\centering
\begin{minipage}{.41\textwidth}%
	\centering
 \scalebox{1.1}{\begin{tikzpicture}[->,>=latex]
	\node[state,black]  at (0,-1) (1)  {$0$};
	\node[] (c) [right of=1]  {};
	\node[state,black] (2) [right of=c]  {$1$};
	\path
		 (1) edge[bend left] node[above] {$(1,6,4,3)(2,5)$} (2)
		 (2) edge[bend left] node[below] {$(2,3)(4,5,6)$} (1)
		 ;

	\end{tikzpicture}
	}
\caption{A 2-state 6-letter cyclic automaton (generating ${S}_6^2$).}%
\label{fig:cyclic2}
\end{minipage}%
\hspace*{5mm}
\begin{minipage}{.53\textwidth}%
	\centering
		\scalebox{0.9}{
			\begin{tikzpicture}

				\def \n {3}
				\def \radius {1.8cm}
				\def \margin {10} 

				  \node[draw, circle] at ({360/\n * 0}:\radius) {$1$};
				  \node[] at ({360/\n * (1 -1/2)}:1.3*\radius) {$(1,3,2,6,5,4)$};
				  \draw[->, >=latex] ({360/\n * 0+\margin}:\radius) 
					arc ({360/\n * (1 - 1)+\margin}:{360/\n * (1)-\margin}:\radius);
					
				  \node[draw, circle] at ({360/\n * 1}:\radius) {$2$};
				  \node[] at ({360/\n * (2 -1/2)}:0.3*\radius) {$(1,4)(2,5,3,6)$};
				  \draw[->, >=latex] ({360/\n * 1+\margin}:\radius) 
					arc ({360/\n * (2 - 1)+\margin}:{360/\n * (2)-\margin}:\radius);
					
				  \node[draw, circle] at ({360/\n * 2}:\radius) {$0$};
				  \node[] at ({360/\n * (0 -1/2)}:1.3*\radius) {$(1,6,2,5,4,3)$};
				  \draw[->, >=latex] ({360/\n *2+\margin}:\radius) 
					arc ({360/\n * (3 - 1)+\margin}:{360/\n * (3)-\margin}:\radius);

			\end{tikzpicture}
			}
\caption{A 3-state 6-letter cyclic automaton (generating $(A_6\times A_6\times A_6)\rtimes \presc{(1,\pi,\pi)}$ with the notations set below).}%
\label{fig:cyclic3}%
\end{minipage}%
\end{figure}

We have proved that  \[\pres{\auta} = \pres{(\rho_0,\rho_1,,\hdots,\rho_{n-1}), (\rho_1,\hdots, \rho_{n-1},\rho_0), \hdots, (\rho_{n-1},\rho_0,\hdots, \rho_{n-2})}\:.\] In such case, we say that the group is \emph{circularly generated} by the tuple $(\rho_1, \hdots, \rho_n)$ and write \[\pres{\auta} = \presc{(\rho_0,\rho_1,\hdots,\rho_{n-1})}\:,\] where the subscript $c$ stands for circular.\\
The multiplication law in $\pres{\auta}$ can be seen as the usual multiplication of permutations extended componentwise to tuples.

\begin{remark}
 The probability distribution on groups obtained by picking random cyclic automata is different from the one arising by picking random  $n$-tuples of $k$-permutations. For instance, the probability of generating the trivial group is $1/k!^{n}$ in the first case, and $1/k!^{n
^2}$ in the second case.
\end{remark}

From now on we focus on groups that are circularly generated by a tuple of permutations. Likewise the case studied by Dixon, where two permutations were considered, their signatures  impact the generated group. The easiest case arises when all permutations have signature $1$, then no odd permutation can be generated by the tuple, hence $\pres{\auta} \leq \GA^n$. To deal with this we mimic the notion of signature for tuples by defining,  for any tuple $(\ssp_0,\hdots,\ssp_{n-1}) \in \GS^n$, \[ \sgnpi{\ssp_0,\hdots,\ssp_{n-1}} := (\pi^{1-\sgn{\ssp_0}\over 2}, \hdots, \pi^{1-\sgn{\ssp_{n-1}}\over 2})\:,\] where $\pi$ is a transposition of $\GS$.\\

Then we obtain:
\begin{restatable}{proposition}{propxxgpstruct}
\label{prop:gpstruct}
Let $\auta=(Q,\Sigma,\delta, \rho)$ be a   $n$-state $k$-letter cyclic automaton with $ Q= \{0,\hdots,{n-1}\}$. Then \[\pres{\auta} = \presc{(\rho_0,\hdots,\rho_{n-1})} \leq \GA^n \rtimes \presc{ \sgnpi{\rho_0,\hdots,\rho_{n-1}}}\:,\] where $\pi$ is an arbitrary transposition of $\GS$.
\end{restatable}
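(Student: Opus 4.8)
The plan is to realise the right-hand side as a genuine subgroup of $\GS^n$ and then to check the $n$ circular generators of $\pres{\auta}$ one at a time. First I would set $\epsilon_i := \tfrac{1-\sgn{\rho_i}}{2}\in\{0,1\}$, so that $\sgnpi{\rho_0,\ldots,\rho_{n-1}}=(\pi^{\epsilon_0},\ldots,\pi^{\epsilon_{n-1}})$, and write $\mot p^{(j)}:=(\pi^{\epsilon_j},\pi^{\epsilon_{j+1}},\ldots,\pi^{\epsilon_{j+n-1}})$ and $\mot g^{(j)}:=(\rho_j,\rho_{j+1},\ldots,\rho_{j+n-1})$ for the $j$-th cyclic shifts (all state indices read modulo $n$). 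By the discussion preceding the statement, $\pres{\auta}=\pres{\mot g^{(0)},\ldots,\mot g^{(n-1)}}$ and likewise $H:=\presc{\sgnpi{\rho_0,\ldots,\rho_{n-1}}}=\pres{\mot p^{(0)},\ldots,\mot p^{(n-1)}}$.

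Since $\pi$ is a transposition it is odd and $\pi^2=\idp$, so every generator of $H$ — hence every element of $H$ — is a tuple all of whose coordinates lie in $\{\idp,\pi\}$; in particular $H\leq\pres{\pi}^n$ and $H\cap\GA^n=\{\idp\}$, because a nontrivial element of $\pres{\pi}^n$ has at least one odd coordinate. As $\GA^n$ is normal in $\GS^n$, the product $\GA^n H$ is a subgroup of $\GS^n$, and since $\GA^n\cap H=\{\idp\}$ this subgroup coincides with the (inner) semidirect product $\GA^n\rtimes H$ appearing in the statement. It then remains to show $\mot g^{(j)}\in\GA^n H$ for every $j$; this yields $\pres{\auta}\leq\GA^n H=\GA^n\rtimes H$ because $\GA^n H$ is a subgroup containing all the generators $\mot g^{(j)}$ of $\pres{\auta}$. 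For this it is enough to check that $\mot g^{(j)}\,(\mot p^{(j)})^{-1}\in\GA^n$: each of its coordinates has the form $\rho_i\,\pi^{-\epsilon_i}$ for the appropriate index $i$, which equals $\rho_i$ (even) when $\rho_i$ is even and $\epsilon_i=0$, and equals $\rho_i\pi$ — a product of two odd permutations, hence even — when $\rho_i$ is odd and $\epsilon_i=1$. So all coordinates are even, $\mot g^{(j)}\,(\mot p^{(j)})^{-1}\in\GA^n$, and thus $\mot g^{(j)}\in\GA^n\mot p^{(j)}\subseteq\GA^n H$.

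There is no genuine obstacle in this argument; the only points worth a word of care are the verification that $H\cap\GA^n$ is trivial, so that ``$\GA^n\rtimes H$'' really is an inner semidirect product inside $\GS^n$ (and in particular the set equality $\GA^n\rtimes H=\GA^n H$ that the coordinatewise check exploits), and — should one prefer a more structural write-up — the fact that $\mot g\mapsto\sgnpi{\mot g}$ is a homomorphism $\GS^n\to\pres{\pi}^n$ with kernel $\GA^n$ that restricts to the identity on $\pres{\pi}^n$. Granting that, $\mot g=\bigl(\mot g\,\sgnpi{\mot g}^{-1}\bigr)\,\sgnpi{\mot g}$ realizes $\GS^n=\GA^n\rtimes\pres{\pi}^n$, and after restricting to $\pres{\auta}$ and using that this homomorphism intertwines the cyclic shifts (whence $\sgnpi{\pres{\auta}}=H$) one obtains the claimed inclusion at once.
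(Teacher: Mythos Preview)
Your proof is correct and follows essentially the same route as the paper: factor each circular generator $\mot g^{(j)}$ as an even tuple times the cyclic shift $\mot p^{(j)}=\sgnpi{\rho_j,\ldots,\rho_{j+n-1}}$, use normality of $\GA^n$ in $\GS^n$ to identify $\GA^n H$ with the inner semidirect product, and conclude. The paper states this more tersely via the bijection $\ssp\mapsto(\ssp\pi^{\epsilon},\pi^{\epsilon})$ from $\GS$ to $\GA\rtimes\pres{\pi}$ extended componentwise, while you spell out the verification $H\cap\GA^n=\{\idp\}$ and the coordinatewise parity check; but the underlying argument is the same.
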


In the following sections, we prove that, as for Dixon's theorem, the group circularly generated by $(\ssp_0,\hdots, \ssp_{n-1})$ is generically the biggest one, \ie $\GA^n \rtimes \presc{\sgnpi{\ssp_0,\hdots, \ssp_{n-1}}}$.\bigskip

Note that the connectedness hypothesis (the fact that the automaton consists in a single cycle) matters. Indeed for a collection of cycles we  get:
\begin{restatable}{proposition}{propxxunion}
\label{prop:union}
Let $I= \{1,\hdots, m\}$ and  $\auta = \bigsqcup_{i \in I} \auta_i$ be the disjoint union of cyclic automata $\auta_i$, each  with $n_i$ states, $k_i$ letters, and transitions $\{\rho_{i,j}\}_{j < n_i}$. Then, putting $k=\max_i(k_i)$, we have  
\[\pres{\auta} \leq  \GA^{\lcm_I{(n_i)}} \rtimes E\:,\]
 where $E\lesssim (\Z/2\Z)^{\lcm_I{(n_i)}}$ has size at most  $2^u$, with 
\begin{equation}u={\sum_{j =1}^{m} (-1)^{j-1}\sum_{\substack{i_1 < i
_2 < \hdots < i_j}} \gcd({n_{i_1},\hdots,n_{i_j})}}\:.  \label{incl-excl}
\end{equation}

\end{restatable}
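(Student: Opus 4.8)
The plan is to realize $\pres{\auta}$ as a subgroup of $\GSk{k}^N$ with $N:=\lcm_I(n_i)$, push it through the coordinatewise signature map, and read off both factors of the claimed semidirect product from elementary facts about periodic vectors over $\Z/2\Z$. First, since the underlying digraph of $\auta$ is a disjoint union of cycles, for a state $q$ of the $i$-th cycle the state reached after reading any word of length $t$ depends only on $t$ (it is $q+t$ modulo $n_i$ inside that cycle), so --- exactly as in the proof of Proposition~\ref{prop:subgp} --- every production function acts \emph{coordinatewise} on $\Sigma^\omega$, the one attached to $q$ being the $n_i$-periodic sequence $(\rho_{i,q},\rho_{i,q+1},\dots)$ of permutations of $\Sigma$; here we view each $\rho_{i,j}$ as an element of $\GSk{k}$ (with $k=\max_i k_i$), fixing the last $k-k_i$ letters. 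Since every element of $\pres{\auta}$ is thus coordinatewise and $N$-periodic (as $n_i\mid N$), it is determined by its first $N$ coordinates, giving a faithful embedding $\pres{\auta}\hookrightarrow\GSk{k}^N$ in which the generator coming from state $q$ of cycle $i$ is the $N/n_i$-fold repetition of the cyclic $q$-shift of $(\rho_{i,0},\dots,\rho_{i,n_i-1})$ --- in particular an $n_i$-periodic element of $\GSk{k}^N$.

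Next, compose with the coordinatewise signature map $s\colon\GSk{k}^N\to(\Z/2\Z)^N$, which has kernel $\GA^N$. By the previous step $\tilde H:=s(\pres{\auta})$ is generated by vectors that are $n_i$-periodic for various $i$, hence $\tilde H\le V:=W_{n_1}+\dots+W_{n_m}$, where $W_d\le(\Z/2\Z)^N$ denotes the subspace of $d$-periodic vectors ($d\mid N$). The heart of the argument is the identity $\dim_{\Z/2\Z}V=u$. Identifying $(\Z/2\Z)^N$ with the functions $\Z/N\Z\to\Z/2\Z$, the subspace $W_d$ consists of the functions factoring through $\Z/N\Z\twoheadrightarrow\Z/d\Z$, so $\dim W_d=d$. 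Writing $N=\prod_p p^{a_p}$ over primes $p\mid N$ and using the Chinese remainder theorem to split $(\Z/2\Z)^N$ as the tensor product of the spaces of functions $\Z/p^{a_p}\Z\to\Z/2\Z$, in each tensor factor the relevant subspaces form a \emph{chain} $W_{p^0}\subset W_{p^1}\subset\dots\subset W_{p^{a_p}}$, which we split by choosing complements; taking tensor products back yields a direct-sum decomposition $(\Z/2\Z)^N=\bigoplus_{d\mid N}U_d$ with $\dim U_d=\varphi(d)$ and $W_d=\bigoplus_{e\mid d}U_e$ for every $d\mid N$. Hence $V=\bigoplus U_d$ over those $d\mid N$ dividing at least one $n_i$, so $\dim V=\sum_{d\,:\,\exists i,\ d\mid n_i}\varphi(d)$; expanding the union of the divisor sets by inclusion--exclusion and using $\sum_{e\mid n}\varphi(e)=n$ turns this into exactly the expression $u$ of~\eqref{incl-excl}.

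Finally, lift $\tilde H$ back into $\GSk{k}^N$: fixing a transposition $\pi\in\GSk{k}$, the assignment $(\varepsilon_0,\dots,\varepsilon_{N-1})\mapsto(\pi^{\varepsilon_0},\dots,\pi^{\varepsilon_{N-1}})$ is an injective homomorphism $(\Z/2\Z)^N\to\GSk{k}^N$ that is a section of $s$; let $E$ be the image of $\tilde H$. Then $E\lesssim(\Z/2\Z)^N$ and $|E|=|\tilde H|=2^{\dim\tilde H}\le 2^{\dim V}=2^u$. Since $\GA^N$ is normal in $\GSk{k}^N$ and any nontrivial element of $E$ has odd signature in some coordinate, $\GA^N\cap E=\{\idp\}$, so $\GA^N\cdot E=\GA^N\rtimes E$ is a subgroup of $\GSk{k}^N$; and every $g\in\pres{\auta}$ has $s(g)\in\tilde H=s(E)$, hence $g$ agrees with some element of $E$ modulo $\ker s=\GA^N$. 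Therefore $\pres{\auta}\le\GA^N\rtimes E$ with $E\lesssim(\Z/2\Z)^{\lcm_I(n_i)}$ of size at most $2^u$, which is the claim.

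\textbf{Main obstacle.} The one step that is not a routine adaptation of the single-cycle case (Propositions~\ref{prop:subgp} and~\ref{prop:gpstruct}) is the exact dimension count $\dim V=u$: one must produce the divisor-indexed direct-sum decomposition compatible with all the $W_d$ simultaneously --- the CRT reduction to prime powers, where the subspaces genuinely form a chain, is what makes this work, since the $W_d$ are \emph{not} a sublattice of the divisor lattice --- and then recognise the resulting sum of Euler totients as the inclusion--exclusion formula~\eqref{incl-excl}. The coordinatewise description of the action, the faithfulness of the level-$N$ truncation, and the semidirect-product bookkeeping are all immediate.
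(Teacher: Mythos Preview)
Your proof is correct and follows the same overall route as the paper: realize each state of cycle $i$ as an $n_i$-periodic element of $\GSk{k}^N$ with $N=\lcm_I(n_i)$, push through the coordinatewise signature into $(\Z/2\Z)^N$, and bound the image by the span $V=W_{n_1}+\dots+W_{n_m}$ of the periodic subspaces, then lift back via $\varepsilon\mapsto\pi^\varepsilon$. Where you go beyond the paper is precisely the point you flag as the main obstacle: the paper's proof simply asserts that ``tuples whose period divides both $n_i$ and $n_j$ belong to both $P_i$ and $P_j$, hence are counted twice'' and invokes inclusion--exclusion to get $|E|\le 2^u$, without justifying why inclusion--exclusion on subspace dimensions is valid here. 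Your CRT-based divisor decomposition $(\Z/2\Z)^N=\bigoplus_{d\mid N}U_d$ with $\dim U_d=\varphi(d)$ and $W_d=\bigoplus_{e\mid d}U_e$ turns the question into honest inclusion--exclusion on the \emph{sets} of divisors, which is exactly what is needed since, as you note, the $W_d$ do not form a sublattice under sum and naive dimension inclusion--exclusion can fail. So the strategy is the same; your argument supplies the missing justification for the step the paper only sketches.
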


\begin{example}
If we have three automata, all  of size  $2$,  then Equation~\eqref{incl-excl} gives \[u=\underbrace{6 - }_\textrm{ size 1 }\underbrace{(2+2+2)}_\textrm{size 2 }\underbrace{+2}_\textrm{size 3 }= 2\:.\]
If the sizes are  $2,3, \text{ and } 5$, we obtain $u=10-(1+1+1)+1 = 8$. Hence we generate groups of size at most $\card{\GA}^{30} \times 2^8 = {k!^{30} \over  2^{22}}$.
\end{example}
\bigskip

This leads us to disprove the following  conjecture from~\cite{KlMaPi}:``the group generated by a  bireversible  $n$-state $k$-letter automaton is either infinite or of order less than or equal to $k!^n$". This conjecture  fails, for instance, for  the disjoint union of the automata from Figure~\ref{fig:cyclic2} and~\ref{fig:cyclic3} , which generates a group of order $34828517376000000 = 6!^6 / 4 > 6!^5$.

\section{The 2-state Case}\label{sec-2case}

In this section we tackle the case of 2-state cyclic automata. Despite  this limitation, most of the combinatorial complexity appears in this case which yet allows us to keep the proof easily readable. \\
In this section, $\auta$ is a   2-state $k$-letter cyclic automaton with $\rho_0 = \ssp$ and~\mbox{$\rho_1 = \ttp$}.\\
The aim of this section is to prove the following :

\begin{theorem}\label{thm:prob}
Let $\auta$ be a 2-state $k$-letter cyclic automaton. Then 

\[\begin{cases} \lim_{k \to \infty} \mathbb{P}\left( \pres{\auta} \simeq \GS \times \GS  \right) &= 1/2 \:,\\ 
 \lim_{k \to \infty} \mathbb{P} \left( \pres{\auta} \simeq  (\GA\times \GA)\rtimes \pres{(\pi,\pi)} \right) & = 1/4 \:,\\
 \lim_{k \to \infty} \mathbb{P}\left( \pres{\auta} \simeq  \GA\times \GA \right)  &= 1/4 \:. 
\end{cases} \]
where $\pi$ is an arbitrary transposition of $\GS$.
\end{theorem}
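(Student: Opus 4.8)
The plan is to leverage Dixon's theorem (Theorem~\ref{thm:dixon}) together with the structural result of Proposition~\ref{prop:gpstruct} to pin down, with probability tending to $1$, exactly which of the three groups $\presc{(\ssp,\ttp)}$ equals. The three cases in Theorem~\ref{thm:prob} are governed entirely by the pair of signatures $(\sgn{\ssp},\sgn{\ttp})$: both even (probability $\to 1/4$, giving $\GA\times\GA$), both odd (probability $\to 1/4$, giving $(\GA\times\GA)\rtimes\pres{(\pi,\pi)}$), and mixed signatures (probability $\to 1/2$, giving $\GS\times\GS$). Since $\sgn{\ssp}$ and $\sgn{\ttp}$ are independent uniform signs, these prior probabilities are $1/4$, $1/4$, $1/2$ exactly, for every $k$; so the whole theorem reduces to showing that, \emph{conditioned} on each signature pattern, the generated group is generically the maximal one allowed by Proposition~\ref{prop:gpstruct}.

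First I would treat the generic behaviour of the ``diagonal'' subgroup. Observe that $\presc{(\ssp,\ttp)}$ contains the elements $(\ssp,\ttp)$, $(\ttp,\ssp)$, hence also $(\ssp\ttp,\ttp\ssp)$, $(\ssp^2,\ttp^2)$, and so on — in fact it contains $(w(\ssp,\ttp),\,w'(\ssp,\ttp))$ for suitable pairs of words. The key point is to produce, generically, enough elements of the form $(g,\idp)$ and $(\idp,h)$ to conclude the two coordinates are ``independent''. A clean way: $\pres{\ssp,\ttp}=\GS$ (or $\GA$) generically by Dixon, so in particular generically $\ssp$ and $\ttp$ do not commute and the group they generate is non-abelian simple-modulo-center; then I would exhibit a commutator or a word $w$ with $w(\ssp,\ttp)=\idp$ in the first coordinate while $w(\ttp,\ssp)\neq\idp$ in the second — e.g. using that a random pair generically has trivial intersection of ``relation sets'', or more concretely by a counting/probabilistic argument that the event ``$w(\ssp,\ttp)=\idp \Rightarrow w(\ttp,\ssp)=\idp$ for all $w$'' has vanishing probability. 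Once one has a nontrivial $(g,\idp)$, conjugating by the diagonally-acting copy of $\pres{\ssp,\ttp}=\GS$ (resp.\ $\GA$) and using that $\GS$ (resp.\ $\GA$, for $k\ge 5$) is generated by a single conjugacy class of such an element together with simplicity of $\GA$, one gets the full first factor $\GS\times\{\idp\}$ (resp.\ $\GA\times\{\idp\}$), and symmetrically the second; combined with the diagonal this yields the whole of $\GS\times\GS$ (resp.\ $\GA\times\GA$), and in the mixed-signature case the semidirect-product description of Proposition~\ref{prop:gpstruct} collapses to $\GS\times\GS$ since an odd permutation already lives in one factor.

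For the all-odd case I would factor through the signature-zero part: the subgroup of $\presc{(\ssp,\ttp)}$ generated by even-length words maps, by the previous paragraph applied to $\ssp\ttp,\ttp\ssp,\ldots$ (which are even), onto $\GA\times\GA$ generically, and then $(\ssp,\ttp)$ itself supplies the extra generator $(\pi,\pi)$ up to the $\GA\times\GA$ action — here one must check that $\presc{\sgnpi{\ssp,\ttp}}=\pres{(\pi,\pi)}$ is indeed realised and not absorbed, which follows since $(\ssp,\ttp)\notin\GA^2$. The main obstacle I anticipate is the ``independence of coordinates'' step: making rigorous, with probability $\to 1$, that the two coordinates of the diagonally-embedded group can be decoupled. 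I expect the cleanest route is not an explicit word but a soft argument: the stabiliser structure forces $\presc{(\ssp,\ttp)}$ to be a \emph{subdirect} product of $\pres{\ssp,\ttp}\times\pres{\ttp,\ssp}$, and by Goursat's lemma a proper subdirect product of $\GS\times\GS$ (or $\GA\times\GA$, $k\ge 5$, $\GA$ simple) corresponds to an isomorphism between the two factors; one then shows that generically no automorphism of $\GS$ (resp.\ $\GA$) sends the ordered pair $(\ssp,\ttp)$ to $(\ttp,\ssp)$, which is a counting estimate of the type $\card{\mathrm{Aut}(\GS)}\cdot\card{\GS} / \card{\GS}^2 = O(k!^{-1+o(1)}) \to 0$. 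Assembling the three conditional statements with the exact prior $1/4,1/4,1/2$ then gives the stated limits.
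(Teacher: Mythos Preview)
Your Goursat-based route is genuinely different from what the paper does, and it is essentially correct. The paper proceeds via Theorem~\ref{thm:2}: it first uses the Erd\H{o}s--Tur\'an limit law for $\log\ordr{\ssp}$ to get $\ordr{\ssp}\neq\ordr{\ttp}$ generically, then (Proposition~\ref{prop:qcycle}) exploits the order discrepancy to manufacture an element $(\idp,\pi)$ with $\pi$ a $p$-cycle, and finally invokes Jordan's theorem (Theorem~\ref{thm:jordan}) together with Lemma~\ref{lem:conj} and Proposition~\ref{prop:primitive} to force the full $\GA$ (or $\GS$) in the second coordinate. Your argument bypasses Jordan entirely: since $\presc{(\ssp,\ttp)}$ is visibly subdirect in $\pres{\ssp,\ttp}^2$, and $\GA$ is simple for $k\ge 5$, Goursat leaves only ``full product'', ``index-$2$ signature subgroup'', or ``graph of an automorphism'' as options, and the last is excluded once no $\phi\in\mathrm{Aut}(\GS)$ (resp.\ $\mathrm{Aut}(\GA)$) sends $(\ssp,\ttp)$ to $(\ttp,\ssp)$. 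This is conceptually cleaner; the paper's explicit construction, on the other hand, yields a deterministic statement (Theorem~\ref{thm:2}) under the hypothesis $\ordr{\ssp}\neq\ordr{\ttp}$ and extends more transparently to the $n$-state case of Section~\ref{sec-genecase}, where a Goursat analysis of subdirect products in $\GS^n$ becomes more delicate.

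One point to fix: your final counting estimate ``$\card{\mathrm{Aut}(\GS)}\cdot\card{\GS} / \card{\GS}^2 = O(k!^{-1+o(1)})$'' is wrong as written, because $\card{\mathrm{Aut}(\GS)}=\card{\GS}=k!$ for $k\neq 2,6$, so that ratio is $1$, not $o(1)$. The correct argument is the one you implicitly need anyway: an automorphism swapping $\ssp$ and $\ttp$ forces them to have the same cycle type (for $k\neq 6$ all automorphisms are inner), and the probability that two uniform permutations are conjugate is $\sim W_1/k^2\to 0$ (this is precisely the Flajolet \emph{et al.}\ estimate the paper quotes). With that correction your plan goes through; note also that in the mixed-signature case no probabilistic input beyond Dixon is needed, since $\ssp,\ttp$ then have different cycle types deterministically, ruling out the graph case outright.
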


\begin{remark}
This is not a direct consequence of  Dixon's theorem. Indeed one can apply Dixon's theorem to $\GS \times \GS$ -- considering  $G_D = \pres{(\ssp,\ttp),(\pi,\rho)}$ -- and get that ${G_D} $ is asymptotically isomorphic to $\GS \times \GS $ with probability $3/8$, to $\GS \times \GA$ with probability $3/8$; $(\GA\times \GA)\rtimes \pres{(\pi,\pi)}$ with probability $3/16$, or to $\GA \times \GA$ with probability $1/16$.
\end{remark}
\medskip

In order to determine the generated group, we focus on the maximal subgroup of $\pres{\auta} \leq \GS \times \GS$ where the first coordinate is $e$.\\

The following lemma allows us to conclude in a restricted number of cases:
\begin{restatable}{lemma}{lemxxcoprime}
\label{lem:coprime}
For  $\gcd(\ordr{\ssp}, \ordr{\ttp}) =  1$ we have $\pres{\auta} = \pres{\ssp,\ttp} \times \pres{\ssp,\ttp}$.
\end{restatable}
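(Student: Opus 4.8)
The plan is to establish the non‑trivial inclusion $\pres{\ssp,\ttp}\times\pres{\ssp,\ttp}\leq\pres{\auta}$; the reverse inclusion is immediate, since $\pres{\auta}=\presc{(\ssp,\ttp)}=\pres{(\ssp,\ttp),(\ttp,\ssp)}$ and both of these generators already lie in $\pres{\ssp,\ttp}\times\pres{\ssp,\ttp}$. Write $a=\os$ and $b=\ot$, so that $\gcd(a,b)=1$. Because $\pres{\ssp,\ttp}\times\{\idp\}$ together with $\{\idp\}\times\pres{\ssp,\ttp}$ generate the full direct product, it suffices to exhibit inside $\pres{\auta}$ the four elements $(\ssp,\idp)$, $(\ttp,\idp)$, $(\idp,\ssp)$, $(\idp,\ttp)$.

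The key point is that coprimality lets us \emph{decouple} the two coordinates by taking powers. Set $g=(\ssp,\ttp)\in\pres{\auta}$. Then $g^{b}=(\ssp^{b},\ttp^{b})=(\ssp^{b},\idp)$, since $b$ is a multiple of $\ot$. As $\gcd(a,b)=1$, the integer $b$ is invertible modulo $a=\os$, hence $\pres{\ssp^{b}}=\pres{\ssp}$; picking $c$ with $bc\equiv 1\pmod a$ gives $(\ssp,\idp)=g^{bc}\in\pres{\auta}$. Symmetrically $g^{a}=(\idp,\ttp^{a})$, and since $a$ is invertible modulo $b$, a suitable power of $g^{a}$ equals $(\idp,\ttp)$, so $(\idp,\ttp)\in\pres{\auta}$ as well.

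Applying exactly the same argument to the other circular generator $h=(\ttp,\ssp)\in\pres{\auta}$ yields $(\ttp,\idp)\in\pres{\auta}$ (as an appropriate power of $h^{a}=(\ttp^{a},\idp)$) and $(\idp,\ssp)\in\pres{\auta}$ (from $h^{b}=(\idp,\ssp^{b})$, using $\pres{\ssp^{b}}=\pres{\ssp}$). Collecting the four elements shows $\pres{\ssp,\ttp}\times\pres{\ssp,\ttp}\leq\pres{\auta}$, and with the trivial inclusion we conclude $\pres{\auta}=\pres{\ssp,\ttp}\times\pres{\ssp,\ttp}$. There is essentially no obstacle here; the only thing to be careful about is that raising to a multiple of one of the orders genuinely kills the corresponding coordinate, while coprimality guarantees that the surviving coordinate still generates the \emph{whole} cyclic group $\pres{\ssp}$ (resp.\ $\pres{\ttp}$) rather than a proper subgroup of it.
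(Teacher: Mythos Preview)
Your proof is correct and follows essentially the same approach as the paper: both exploit coprimality of $\os$ and $\ot$ to kill one coordinate of $(\ssp,\ttp)$ (resp.\ $(\ttp,\ssp)$) by raising to a suitable power while recovering the full generator on the other coordinate, thereby producing $(\ssp,\idp),(\ttp,\idp),(\idp,\ssp),(\idp,\ttp)\in\pres{\auta}$. The paper phrases the power via B\'ezout (taking $(\ssp,\ttp)^{u\os}$ directly yields $(\idp,\ttp)$), whereas you first raise to the order and then to an inverse modulo the other order, but this is the same computation.
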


However it is very unlikely that two permutations have relatively prime orders~\cite{ErTuV}. To prove  Theorem~\ref{thm:prob} we use a theorem of Jordan (see~\cite{Dix}):

\begin{theorem}[Jordan]\label{thm:jordan}
Let $G \leq \GS$ be a primitive group  containing a cycle of prime length $p \leq k-3$. Then $G$ is either the  symmetric group $\GS$ or the alternating group $\GA$.
\end{theorem}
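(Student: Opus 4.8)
The plan is to prove the statement directly: show that a primitive $G \le \GS$ containing a $p$-cycle (with $p$ prime and $p \le k-3$) must contain the alternating group $\GA$, and then observe that since $\GA$ is maximal of index $2$ in $\GS$, the chain $\GA \le G \le \GS$ forces $G \in \{\GS,\GA\}$. Write $\Omega = \{1,\dots,k\}$, let $\ssp \in G$ be the given $p$-cycle, let $\Delta = \mathrm{supp}(\ssp)$ be its $p$-element support, and $\Phi = \Omega \setminus \Delta$ its fixed set, so $\card{\Phi} = k-p \ge 3$. First I would dispatch $p=2$: a transposition lies in $G$, and the pairs $\{a,b\}$ with $(a\,b)\in G$ form a $G$-invariant graph on $\Omega$ whose connected components are a block system; by primitivity (and since at least one edge exists) the graph is connected, so transpositions along a spanning tree generate $\GS$ and $G=\GS$. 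From now on $p$ is an odd prime and $\ssp$ is even, and the target becomes the production of a single $3$-cycle in $G$.

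The core reduction is that it suffices to exhibit one $3$-cycle in $G$: a primitive group containing a $3$-cycle contains $\GA$ (the normal closure of a $3$-cycle is again generated by $3$-cycles, primitivity makes its orbit all of $\Omega$, and a transitive group generated by such a class of $3$-cycles is the full alternating group). I would obtain a $3$-cycle from two $p$-cycles whose supports overlap in exactly $p-1$ points: a direct check shows that two such $p$-cycles, suitably aligned, generate on the $p+1$ points of their combined support a group containing a $3$-cycle (already for $p=3$ one has $\pres{(1,2,3),(2,3,4)} = \GAk{4}$). Everything therefore reduces to placing inside $G$ a second copy of a power of $\ssp$ whose support is a one-step shift $\Delta'$ of $\Delta$ with $\card{\Delta \cap \Delta'} = p-1$, with the rotations matched.

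The engine for producing this configuration is the calculus of \emph{Jordan sets}: a set $\Gamma \subseteq \Omega$ is a Jordan set if $\card{\Omega \setminus \Gamma} \ge 2$ and the pointwise stabiliser $G_{(\Gamma)}$ is transitive on $\Omega \setminus \Gamma$. The first observation is that $\Phi$ is itself a Jordan set with complement of prime size $p$, since $\ssp \in G_{(\Phi)}$ acts as a single $p$-cycle, hence transitively, on $\Delta$. The plan is then to \emph{shrink} the complement: each translate $g\Phi$ ($g \in G$) is again a Jordan set, and when two Jordan sets meet one shows their union is a Jordan set whose complement is the intersection of the two complements, hence strictly smaller. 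Iterating and passing to a Jordan set $\Gamma$ whose complement $\Delta_0$ has minimal size $m$, the induced group $G_{(\Gamma)}^{\Delta_0}$ is primitive on $\Delta_0$; feeding the primality of $p$ and an induction on the degree into this minimal constituent forces $m \le 3$. The pointwise stabiliser of such a $\Gamma$ then contains a transposition or a $3$-cycle acting on those $2$ or $3$ points and fixing everything else, i.e. a genuine transposition or $3$-cycle in $G$, which closes the argument via the previous paragraph (and, if an odd permutation appears, upgrades $\GA \le G$ to $G = \GS$).

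The main obstacle I anticipate is precisely this shrinking step. One must verify, with the correct non-triviality hypotheses, that the union of two overlapping Jordan sets is again a Jordan set (so that the two pointwise stabilisers combine into a transitive action on the intersection of complements), and then control the minimal complement $\Delta_0$. The delicate point is to show that the induced primitive group on $\Delta_0$ still contains a cycle of prime length, so that the degree induction applies and $m$ cannot stall at some value between $4$ and $p$; this is exactly where the primality of $p$ and the primitivity of $G$ must be used together, and it is the genuine heart of the theorem.
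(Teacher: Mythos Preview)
The paper does not give a proof of this statement: Theorem~\ref{thm:jordan} is quoted as a classical result of Jordan, with a reference to~\cite{Dix}, and used as a black box. So there is no in-paper argument to compare your proposal against.

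On the proposal itself: the architecture---isolate a transposition or a $3$-cycle via the calculus of Jordan sets, then conclude---is the classical one, and your $p=2$ argument and the reduction ``primitive with a $3$-cycle implies $\GA \le G$'' are both sound. The genuine gap is the \emph{direction} of your Jordan-set lemma. In your convention (a Jordan set is a set $\Gamma$ with $G_{(\Gamma)}$ transitive on $\Omega\setminus\Gamma$), the correct statement is: if the \emph{complements} $\Delta_i = \Omega\setminus\Gamma_i$ meet non-trivially and neither contains the other, then $\Gamma_1\cap\Gamma_2$ is a Jordan set, \ie the support $\Delta_1\cup\Delta_2$ \emph{grows}. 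Your version---that $\Gamma_1\cup\Gamma_2$ is a Jordan set, so the support $\Delta_1\cap\Delta_2$ shrinks---is false in general: $G_{(\Gamma_1\cup\Gamma_2)} = G_{(\Gamma_1)}\cap G_{(\Gamma_2)}$ is a \emph{smaller} group, and nothing forces it to act transitively on $\Delta_1\cap\Delta_2$. The shrinking mechanism you rely on therefore does not exist, and the minimal-complement argument built on it collapses.

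The classical route runs the lemma the other way: iterate the union lemma on translates of $\Delta = \mathrm{supp}(\ssp)$ to grow the support up to $\Omega\setminus\{\alpha\}$ for each $\alpha$, obtaining $2$-transitivity; since $\card{\Delta}=p$ is prime, the action of $G_{(\Phi)}$ on $\Delta$ is automatically primitive, which upgrades this to $(k-p+1)$-transitivity, hence at least $4$-transitivity. One then passes to a point stabiliser $G_\alpha$ (primitive on $k-1$ points and still containing the $p$-cycle $\ssp$) and inducts on the degree, handling the base $k=p+3$ directly. Your outline becomes correct once you reverse the arrow on the Jordan-set lemma and replace ``shrink to a minimal complement'' by ``grow to high transitivity, then descend through point stabilisers''.
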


To apply this theorem we prove that we can find on the second coordinate a primitive group which contains a $p$-cycle.\\
The following proposition  restricts the search of a subgroup to the search for a $p$-cycle.\\
We recall that  the conjugacy classes in $\GS$ are formed by the permutations with the same cycle structures. The same holds in $\GA$,  and, if the cycle structure (including the cycles of length 1) consists only of cycles of odd length with no two cycles of same length,  there are two conjugacy classes and we say that the conjugacy class \emph{splits}, otherwise the elements with the same cycle structure form a single conjugacy class~\cite{Sco}. Using the notation $\pi^{\rho} =  \rho^{-1}\pi\rho$ we hqve

\begin{restatable}{proposition}{propxxprimitive}
\label{prop:primitive}
Let $\pi$ be a $p$-cycle of $\GS$ with $p \text{ prime}$ and $k \geq 5 $, then the groups $G_\pi(\GS)= \pres{\pi^\rho~|~ \rho \in \GS}$ and  $G_\pi(\GA)= \pres{\pi^\rho~|~ \rho \in \GA}$ are primitive.
\end{restatable}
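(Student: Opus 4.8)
The plan is to show that the two groups are transitive and that no non-trivial partition of $\{1,\hdots,k\}$ is preserved, and to deduce primitivity from the stronger assertion that they are in fact $2$-transitive (which implies primitivity). First I would establish transitivity: since $\pi$ is a $p$-cycle with $p \geq 5$, it moves at least two points, and for any two points $a,b$ there is a conjugate $\pi^\rho$ moving $a$ to $b$ by choosing $\rho$ appropriately; crucially, because $p \geq 3$ we may pick $\rho$ to be an even permutation (if the naive choice of $\rho$ is odd, compose it with a transposition supported outside the two points $a,b$, which exists as $k \geq 5$), so both $G_\pi(\GS)$ and $G_\pi(\GA)$ are transitive. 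A cleaner route, which I would prefer, is to go directly for $2$-transitivity: given ordered pairs $(a_1,a_2)$ and $(b_1,b_2)$ of distinct points, exhibit $\rho$ (and, for the alternating case, an even such $\rho$, again adjusting by a transposition off the relevant support since $k \geq 5$ leaves room) with $\rho^{-1}\pi\rho$ sending $a_1 \mapsto b_1$ and $a_2 \mapsto b_2$; this uses only that the $p$-cycle $\pi$ contains, among its conjugates, cycles through any prescribed consecutive pair of points. Since a $2$-transitive group is primitive, this finishes both cases at once.

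The key steps, in order, are: (i) describe the conjugates $\pi^\rho$ explicitly as the set of all $p$-cycles of $\GS$ (for $G_\pi(\GS)$) and note that conjugation by $\GA$ still yields every $p$-cycle when $k \geq p+2$, because the centraliser of a $p$-cycle in $\GS$ contains an odd permutation — namely a transposition of two fixed points, which exists since $k - p \geq 2$ — so the $\GS$-orbit and the $\GA$-orbit of $\pi$ under conjugation coincide; (ii) conclude that $G_\pi(\GA) = G_\pi(\GS) =: G$, reducing to a single group; (iii) show $G$ is transitive, indeed $2$-transitive, by the pair-mapping argument above, using that $p \geq 2$ lets us find a $p$-cycle through any prescribed ordered pair of points inside a set of size $\geq p$; (iv) invoke the standard fact that $2$-transitive $\Rightarrow$ primitive. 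Step (i) is where the hypotheses $p$ prime and $k \geq 5$ are actually consumed: primality guarantees $\pi^\rho$ is again a single $p$-cycle (all conjugates have the same cycle type, and a prime-length cycle has no proper power that is a cycle of the same order issue), and $k \geq 5 > p$ — combined with $p$ prime forcing $p \geq 2$, and in the interesting range $p \geq 3$ — ensures enough spare points to make the centraliser-adjustment and the even-permutation adjustment possible.

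The main obstacle I expect is the alternating-group case: one must be careful that restricting conjugators to $\GA$ does not shrink the orbit of $\pi$ and hence the generated group. The resolution is exactly the centraliser observation in step (i): if $C_{\GS}(\pi) \not\leq \GA$ then the $\GA$-conjugacy class of $\pi$ equals its $\GS$-conjugacy class, and $C_{\GS}(\pi)$ visibly contains the transposition $(x\ y)$ for any two points $x,y$ fixed by $\pi$, which exist since $k \geq p+2$; this is where one checks $k-p \geq 2$, automatic from $k \geq 5$ when $p \leq k-3$ (the regime of Jordan's theorem in which this proposition will be applied) and in any case available since we only need $k > p$ together with at least two fixed points. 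Once step (i) is secured, steps (ii)–(iv) are routine. I would present (iii) as the $2$-transitivity claim rather than bare primitivity, since it is no harder to prove and makes the implication to primitivity immediate, avoiding any block-system bookkeeping.
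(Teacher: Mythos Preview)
Your overall plan---reduce to a single group via a centraliser observation, then prove $2$-transitivity---is a reasonable alternative to the paper's direct block-system check, but step~(iii) as written contains a real gap. You claim that for any ordered pairs $(a_1,a_2)$ and $(b_1,b_2)$ of distinct points one can exhibit a \emph{single} conjugate $\pi^\rho$ with $\pi^\rho(a_i)=b_i$. This is false whenever $(b_1,b_2)=(a_2,a_1)$ and $p$ is odd, since no cycle of odd length can swap two points. Two-transitivity of $G_\pi$ does hold, but it is a property of the generated group, not of individual generators, and ``cycles through any prescribed consecutive pair of points'' is only the transitivity statement; it does not yield the simultaneous two-point mapping you need. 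An easy repair is to argue through point-stabilisers: once all $p$-cycles lie in $G$ (your step~(i)), those $p$-cycles avoiding a fixed point $a$ already act transitively on the remaining $k-1$ points provided $p\leq k-1$; the case $p=k$ then still needs separate treatment.

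This connects to the second gap: your centraliser argument in step~(i) needs $k-p\geq 2$, which is not part of the hypotheses (only $k\geq 5$ is assumed), and the cases $p\in\{k-1,k\}$ are precisely where the $\GA$-conjugacy class of a $p$-cycle splits. You note the issue but defer to ``the regime of Jordan's theorem'', which is not available in the present statement. The paper sidesteps both difficulties by arguing primitivity directly rather than via $2$-transitivity: given a non-trivial block $\Sigma_1\ni i,j$ and some $\ell\notin\Sigma_1$, the conjugate $(i,j,x_3,\dots,x_{p-1},\ell)$ of $\pi$ sends $i\in\Sigma_1$ to $j\in\Sigma_1$ yet also sends $\ell\notin\Sigma_1$ to $i\in\Sigma_1$, contradicting the block property. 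When the required conjugator $\rho$ is odd, the paper swaps two of the interior entries of the cycle (possible since splitting forces $p\geq 5$), which flips the parity of $\rho$ while preserving the images of $i$ and $\ell$; this avoids both the single-conjugate pitfall and any appeal to fixed points of $\pi$.
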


The following  easy but useful lemma shows that, if $\GA \leq \pres{\ssp,\ttp}$ and there exists a $p$-cycle $\pi$ such that $(\idp,\pi) \in \pres{\auta}$, then the maximal subgroup of $\pres{\auta}$ where the first component is $e$ is isomorphic to  either $\GA$ or $\GS$.

\begin{restatable}{lemma}{lemxxconj}
\label{lem:conj}
Let $(\idp,\pi)\in \pres{\auta}$. Then for any $\rho \in \pres{\ssp,\ttp}$ we have $(\idp,\pi^\rho) \in \pres{\auta}$.
\end{restatable}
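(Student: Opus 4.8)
The plan is to exploit the cyclic structure of the automaton, which (by Proposition~\ref{prop:subgp} and the discussion following it) makes the group act on words in a shift-equivariant way. Concretely, if $(\idp,\pi)\in\pres{\auta}$, then $(\idp,\pi)$ is the image of some word $\mot{u}\in Q^*$ under $\rho$; that is, there is a word $\mot{u}=u_0u_1\cdots u_\ell$ over $Q=\{0,1\}$ such that the tuple $(\rho_{\mot{u}},\rho_{\mot{u}}^{\,\shift{}{}})$ associated with $\mot{u}$ reads, on the two cyclic starting points, as $(\idp,\pi)$ — the first coordinate being the action starting at state $0$ and the second the action starting at state $1$. The key observation is that conjugating a word of states componentwise amounts to multiplying on both sides by an element of $\pres{\auta}$, but with the two coordinates possibly mismatched by the shift; the $2$-state setting is special because shifting twice is the identity.

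First I would write $\rho$ for an arbitrary element of $\pres{\ssp,\ttp}=\pres{\rho_0,\rho_1}$ and observe that, by the componentwise multiplication law, $(\rho,\rho)\in\pres{\auta}$: indeed any word of states, read on the first coordinate, already generates all of $\pres{\ssp,\ttp}$, and because we are in the $2$-state case the shift of a word of even length starts again at state $0$, so squaring any generator lands us on the diagonal. More carefully, $(\ssp,\ttp)\cdot(\ttp,\ssp)=(\ssp\ttp,\ttp\ssp)$ and taking the word $\mot{u}$ followed by its shift yields $(\rho_{\mot{u}}\rho_{\mot{u}}^{\,\shift{}{}},\,\rho_{\mot{u}}^{\,\shift{}{}}\rho_{\mot{u}})$; iterating, every element of the form $(\rho,\rho^{\,\shift{}{}})$ lies in $\pres{\auta}$ where $\shift{}{}$ denotes the swap $\ssp\leftrightarrow\ttp$, and in particular so does $(\rho^{-1},(\rho^{-1})^{\,\shift{}{}})$. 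Then I would simply compute
\[
(\rho^{-1},(\rho^{-1})^{\,\shift{}{}})\cdot(\idp,\pi)\cdot(\rho,\rho^{\,\shift{}{}}) \;=\; (\idp,\; (\rho^{-1})^{\,\shift{}{}}\,\pi\,\rho^{\,\shift{}{}})\;=\;(\idp,\pi^{\rho^{\,\shift{}{}}})\,,
\]
which exhibits $(\idp,\pi^{\rho'})$ in $\pres{\auta}$ for $\rho'=\rho^{\,\shift{}{}}$. Since $\rho$ ranges over all of $\pres{\ssp,\ttp}$ and the swap $\shift{}{}$ is a bijection of that set onto itself (it fixes the subgroup setwise, as $\{\ssp,\ttp\}$ generate it and are merely exchanged), $\rho'$ also ranges over all of $\pres{\ssp,\ttp}$, giving the claim.

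The main obstacle to watch is the shift bookkeeping: one must be careful that the element used to conjugate on the right really does act as $\rho^{\,\shift{}{}}$ on the second coordinate while acting as $\rho$ on the first, and that the corresponding left factor is its inverse in the group $\pres{\auta}$ (not merely a formal inverse of a tuple). This is precisely where the $2$-state restriction is used — a word and its shift together return to the original starting state, so the ambiguity collapses — and it is the point the write-up should make explicit, perhaps by exhibiting the word of states witnessing $(\rho,\rho^{\,\shift{}{}})$ and its inverse directly. Once that is pinned down, the computation above is routine, and combined with Propositions~\ref{prop:primitive} and Theorem~\ref{thm:jordan} it yields, in the presence of a generated $p$-cycle on the second coordinate, that the maximal subgroup of $\pres{\auta}$ with trivial first coordinate is $\GA$ or $\GS$.
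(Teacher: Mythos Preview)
Your argument is correct and follows the same idea as the paper's proof: the paper introduces the involutory morphism $\dualp{\,\cdot\,}$ on words (exactly your ``shift'' $\ssp\leftrightarrow\ttp$), notes that $(\dualp{\rho},\rho)\in\pres{\auta}$ for every word $\rho$, and conjugates $(\idp,\pi)$ by this element to obtain $(\idp,\pi^\rho)$ directly. By placing $\rho$ on the \emph{second} coordinate from the start, the paper avoids your final surjectivity step entirely.

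One small imprecision in your write-up: you assert that the swap is ``a bijection of that set onto itself'', i.e.\ of $\pres{\ssp,\ttp}$. In general the swap $\ssp\leftrightarrow\ttp$ does \emph{not} descend to a well-defined map on the group (e.g.\ $\ssp^2=\idp$ need not imply $\ttp^2=\idp$). What is true, and what you actually need, is that the swap is a bijection on \emph{words} in $\{\ssp,\ttp\}^*$, so as the word $\rho$ ranges over all words, $\rho^{\shift{}{}}$ also ranges over all words and hence its evaluations cover $\pres{\ssp,\ttp}$. Phrasing it that way (or, better, conjugating by $(\rho^{\shift{}{}},\rho)$ as the paper does) removes the issue.
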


Hence if we find a cycle of prime length less than $k-3$ we will be able to conclude. The following lemma explains how to find such a permutation
and  contains the essence of the proof of Theorem~\ref{thm:prob}. 

\begin{proposition}\label{prop:qcycle}
Let $\ssp$ and $\ttp$ be two permutations of different orders and such that $\pres{\ssp,\ttp}= \GS \text{ or } \GA$.  Then there exists  a $p$-cycle $\pi$ ($p$ prime) such that $(\idp,\pi) \in  \presc{(\ssp,\ttp)}$.
\end{proposition}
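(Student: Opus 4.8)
The plan is to produce the element $(\idp,\pi)$ in two moves: first raise $(\ssp,\ttp)$ to a power that kills the first coordinate while leaving something nontrivial on the second, then conjugate that second component until it becomes a $3$-cycle. I will use freely that the group law on $\presc{(\ssp,\ttp)}$ is the componentwise product and that $\presc{(\ssp,\ttp)} = \langle(\ssp,\ttp),(\ttp,\ssp)\rangle$, so that the projection of $\presc{(\ssp,\ttp)}$ onto its second coordinate equals $\pres{\ssp,\ttp}$.

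First I would exploit the symmetry $\presc{(\ssp,\ttp)} = \presc{(\ttp,\ssp)}$ (reading the single cycle of $\auta$ backwards generates the same group), which allows exchanging the roles of $\ssp$ and $\ttp$. Since $\os \neq \ot$, the two orders cannot each divide the other, so after a possible exchange I may assume $\ot \nmid \os$. Then, raising componentwise, $(\ssp,\ttp)^{\os} = (\idp,\ttp^{\os})$, and $\theta := \ttp^{\os}$ is not the identity precisely because $\ot \nmid \os$. This puts a nontrivial element $(\idp,\theta)$ into $\presc{(\ssp,\ttp)}$.

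Next I would consider $N_2 := \{\beta \mid (\idp,\beta)\in\presc{(\ssp,\ttp)}\}$, which is a subgroup of $\pres{\ssp,\ttp}$ containing $\theta$. By Lemma~\ref{lem:conj} it is stable under conjugation by every element of $\pres{\ssp,\ttp}$, hence it is a nontrivial normal subgroup of $\pres{\ssp,\ttp}$. Assuming $k \geq 5$, so that $\GA$ is simple, the only normal subgroups of $\GS$ are $\{\idp\}$, $\GA$ and $\GS$, while those of $\GA$ are $\{\idp\}$ and $\GA$; since $N_2 \neq \{\idp\}$ and $\pres{\ssp,\ttp} \in \{\GS,\GA\}$, this forces $\GA \leq N_2$. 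As $\GA$ contains $3$-cycles and $3$ is prime, any $3$-cycle $\pi$ then satisfies $(\idp,\pi)\in\presc{(\ssp,\ttp)}$, as claimed. (More precisely $N_2 \supseteq \GA$ supplies a $p$-cycle on the second coordinate for every odd prime $p \leq k$, the form actually needed afterwards when combining with Proposition~\ref{prop:primitive} and Theorem~\ref{thm:jordan}.)

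The only genuinely delicate step is the reduction in the second paragraph: the hypothesis that $\ssp$ and $\ttp$ have \emph{distinct} orders is used precisely to guarantee, after the harmless exchange, that $\ttp^{\os}\neq\idp$ — this is exactly what makes $N_2$ a \emph{nontrivial} normal subgroup and hence forces $\GA\leq N_2$. If the orders were allowed to coincide the argument would break down: e.g.\ for $\ssp=\ttp$ the group $\presc{(\ssp,\ttp)}$ is cyclic and meets $\{\idp\}\times\GS$ trivially. Everything else is the routine observation that a nontrivial normal subgroup of $\GS$ or $\GA$ (with $k\geq5$) already contains the alternating group.
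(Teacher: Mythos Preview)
Your proof is correct and takes a genuinely different, cleaner route than the paper's. The paper proceeds constructively: it raises $(\ssp,\ttp)$ to the power $d=\gcd(\os,\ot)$ so as to isolate a prime $p$ dividing exactly one of $\ordr{\ssp^d},\ordr{\ttp^d}$, produces an element $(\idp,\check\ttp)$ with $\check\ttp$ a product of disjoint $p$-cycles, and then runs a case analysis (separating $p\neq 2$ from $p=2$, and within $p=2$ whether the support is all of $\{1,\dots,k\}$) to cancel all but one of those cycles by multiplying carefully chosen conjugates. You bypass all of this: once a single nontrivial element $(\idp,\theta)$ is produced, Lemma~\ref{lem:conj} makes $N_2=\{\beta:(\idp,\beta)\in\presc{(\ssp,\ttp)}\}$ a nontrivial normal subgroup of $\pres{\ssp,\ttp}\in\{\GS,\GA\}$, and the simplicity of $\GA$ (for $k\geq5$) forces $\GA\leq N_2$ at once.

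Your argument in fact yields more than the stated proposition: you obtain $\{\idp\}\times\GA\leq\presc{(\ssp,\ttp)}$ directly, and by the symmetry $\presc{(\ssp,\ttp)}=\presc{(\ttp,\ssp)}$ also $\GA\times\{\idp\}$, hence $\GA\times\GA\leq\presc{(\ssp,\ttp)}$. This renders the later appeal to Jordan's theorem and the six exceptional cases in the proof of Theorem~\ref{thm:2} unnecessary. The paper's explicit construction, by contrast, outputs a \emph{specific} prime $p$ tied to the arithmetic of $\os,\ot$, which is precisely why it must afterwards handle separately the situations where that $p$ lands in $\{k-2,k-1,k\}$. Two minor remarks: your justification for the swap (``reading the cycle backwards'') is not quite the right picture---the equality $\presc{(\ssp,\ttp)}=\presc{(\ttp,\ssp)}$ holds simply because the generating set $\{(\ssp,\ttp),(\ttp,\ssp)\}$ is symmetric, as you also note; and your route needs $k\geq5$ for the simplicity of $\GA$, but the paper's proof also tacitly assumes $k>4$ in its case~2(b), so neither covers very small $k$, which is irrelevant for the asymptotic statements anyway.
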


\begin{proof}

Put $d= \gcd(\ordr{\ssp},  \ordr{\ttp})$. By construction $\ordr{\ssp^d}$ and $\ordr{\ttp^d}$ are co-prime. Assume $\ordr{\ttp^d} \neq \idp$: choose any  prime number $p$ that divides the order of $\ttp^d$ (hence $p$ does not divide the order of $\ssp^d$). Let~$a$ be the largest integer such that $p^a$ divides $\ordr{\ttp^d}$. Hence $\ttp^{d p^{a-1}}$ has order $p r$, with $\gcd(p, r) = 1$, and  $\ssp^{d p^{a-1}}$ has order co-prime with $p$ (since it is the same order as $\ssp^d$). For simplicity we put $\hat\ttp = \ttp^{d p^{a-1}} \neq \idp $ and  $\hat\ssp = \ssp^{d p^{a-1}}$.\\
Then $ \pres{\auta} \ni(\hat\ssp,\hat\ttp)^{\ordr{\hat\ssp}\ordr{\hat\ttp} \over p}  = ((\hat\ssp^{\ordr{\hat\ssp}})^{\ordr{\hat\ttp}\over p }, (\hat\ttp^{\ordr{\hat\ttp} \over p})^{\ordr{\hat\ssp}}) = (\idp, (\hat\ttp^{\ordr{\hat\ttp} \over p})^{\ordr{\hat\ssp}})$. Hence ${\hat\ttp}^{\ordr{\hat\ttp} \over p}$ has order $p$ by construction and, since $\gcd(\ordr{\hat\ssp}, p) = 1$, so has $(\hat\ttp^{\ordr{\hat\ttp} \over p})^{\ordr{\hat\ssp}} = \check{\tau}$.\\
Since $\check\ttp$ has order $p$, it is a product $\prod_i \pi_i$ of $\ell$ disjoint $p$-cycles. For $\ell > 1$, we construct a $p$-cycle by considering the different cases:

	\begin{enumerate}
		\item Case $p \neq 2$. Let $\tilde\ttp $ be the conjugate $ \pi_1\prod_{i \geq 2}\pi^{-1}_i$  of $\check\ttp$. Hence $\pres{\auta} \ni (\idp,\check\ttp)(\idp,\tilde\ttp)  = (\idp,\pi_1^2)$ and, since $\gcd(2, p) = 1$, $\pi_1^2$ is a $p$-cycle.
		\item Case $p=2$. 
			\begin{enumerate}
				\item If $2\ell < k$. Then $\tilde\tau  = \prod_{i \geq 0} (2i+1,2i+2)$ and $\dot\tau  = (1,k)\prod_{i\geq 0} (2i+1,2i+2) $ are in the same conjugacy class as $\check\tau$, hence $\pres{\auta} \ni (\idp,\tilde\tau)(\idp,\dot\tau) = (\idp,(1,2,k))$, whence we obtain a $3$-cycle.
				\item Otherwise $\tilde\tau  = \prod_{i \geq 0} (2i+1,2i+2)$ and $\dot\tau  = (1,4)(2,3)\prod_{i \geq 2} (2i+1,2i+2) $ satisfy $\tilde\tau \dot\tau =(1,3)(2,4)$ which reduces to the previous case (assuming $k> 4$).
			\end{enumerate}
	\end{enumerate}
\qed \end{proof}

We can now state our theorem:

\begin{theorem}\label{thm:2}
Let $\ssp$ and $\ttp$ be two permutations of $\GS$, $k>5$,  of different orders and such that $\pres{\ssp,\ttp}= \GS \text{ or } \GA$.  Then

\[
\pres{\autpd{0.5}{-.9em}}=  \begin{cases} \GS \times \GS  & \text{ for } \sgn{\ssp} \neq \sgn{\ttp},\\ 
  (\GA\times \GA)\rtimes \pres{(\pi,\pi)}   & \text{ for } \sgn{\ssp} = \sgn{\ttp} = -1,\\
 \GA\times \GA  & \text{ for } \sgn{\ssp} = \sgn{\ttp} = 1,
 \end{cases}
 \]
where $\pi$ is an arbitrary transposition of $\GS$.
\end{theorem}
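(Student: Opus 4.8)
The plan is to leverage all the machinery assembled above in sequence. Set $G = \pres{\auta} = \presc{(\ssp,\ttp)}$ and let $H \leq G$ be the maximal subgroup whose first coordinate is $\idp$; by Proposition~\ref{prop:gpstruct} we already know $G \leq \GA^2 \rtimes \presc{\sgnpi{\ssp,\ttp}}$, so the three target groups are exactly the maximal possibilities allowed by the signatures of $\ssp$ and $\ttp$. The whole point is therefore to show that $G$ is \emph{as large as possible}, and the key reduction is to show $\GA \times \GA \leq G$, i.e. that $H$ contains $\{\idp\} \times \GA$.

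First I would invoke Proposition~\ref{prop:qcycle}: since $\ssp$ and $\ttp$ have different orders and $\pres{\ssp,\ttp} = \GS$ or $\GA$, there is a prime $p$ and a $p$-cycle $\pi$ with $(\idp,\pi) \in G$. Next, by Lemma~\ref{lem:conj}, for every $\rho \in \pres{\ssp,\ttp}$ we get $(\idp, \pi^\rho) \in G$, so $H$ contains the subgroup $G_\pi(\pres{\ssp,\ttp})$ of the second coordinate generated by all conjugates of $\pi$ — that is $G_\pi(\GS)$ or $G_\pi(\GA)$ depending on the case. By Proposition~\ref{prop:primitive} this group is primitive (using $k \geq 5$), and it obviously contains the $p$-cycle $\pi$. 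Now I would split according to whether $p \leq k-3$: if so, Jordan's theorem (Theorem~\ref{thm:jordan}) immediately gives $G_\pi \in \{\GS, \GA\}$, hence $\GA \leq G_\pi \leq H$, so $\{\idp\}\times \GA \leq G$. The residual case $p > k-3$, i.e. $p \in \{k-2,k-1,k\}$ prime, must be handled separately — but there one can note that a single $p$-cycle with $p$ so close to $k$ together with primitivity still forces the full alternating group by classical results on primitive groups containing a long cycle (or one re-runs the argument of Proposition~\ref{prop:qcycle} to extract a shorter prime-length cycle, e.g. a $3$-cycle, from powers/products of conjugates); in any event $\GA \leq H$ follows. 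I expect this boundary case to be the main technical nuisance, though it is not deep.

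Once $\{\idp\} \times \GA \leq G$ is established, the rest is bookkeeping with the semidirect structure. By symmetry of the circular generation (the tuple $(\ttp,\ssp)$ generates the same group, since $\presc{(\ssp,\ttp)}$ is generated by both cyclic shifts), the same argument run with the roles of the coordinates swapped gives $\GA \times \{\idp\} \leq G$, hence $\GA \times \GA \leq G$. It remains to see what the generators $(\ssp,\ttp)$ and $(\ttp,\ssp)$ add on top of $\GA \times \GA$, and this is dictated exactly by $\sgn\ssp$ and $\sgn\ttp$. If $\sgn\ssp \neq \sgn\ttp$, then one of $(\ssp,\ttp),(\ttp,\ssp)$ has an odd permutation in the first coordinate and an even one in the second (after multiplying by a suitable element of $\GA\times\GA$ we can assume the even coordinate is $\idp$): combined with $\GA\times\GA$ this produces an element of the form $(\text{odd},\idp)$, and together with $\GA\times\GA$ and the analogous $(\idp,\text{odd})$ this generates all of $\GS \times \GS$, matching the upper bound from Proposition~\ref{prop:gpstruct}. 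If $\sgn\ssp = \sgn\ttp = 1$, then $G \leq \GA\times\GA$ by Proposition~\ref{prop:gpstruct} and we have just shown the reverse inclusion, so $G = \GA\times\GA$. If $\sgn\ssp = \sgn\ttp = -1$, then modulo $\GA\times\GA$ the generator $(\ssp,\ttp)$ reduces to $(\pi,\pi)$ for a transposition $\pi$ (any two transpositions are conjugate, and conjugation by $\GA\times\GA$ is available), with $(\pi,\pi)^2 = \idp$ and $(\pi,\pi)\notin \GA\times\GA$; hence $G = (\GA\times\GA)\rtimes\pres{(\pi,\pi)}$, again meeting the bound of Proposition~\ref{prop:gpstruct}. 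In all three cases $G$ equals the maximal group permitted, which is exactly the claimed trichotomy. \qed
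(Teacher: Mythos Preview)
Your approach mirrors the paper's almost exactly: invoke Proposition~\ref{prop:qcycle} for a $p$-cycle, use Lemma~\ref{lem:conj} and Proposition~\ref{prop:primitive} to feed Jordan's theorem when $p\le k-3$, then handle the boundary case, and finish with the signature bookkeeping against the upper bound of Proposition~\ref{prop:gpstruct}. Your final paragraph on signatures is in fact more explicit than the paper's, which simply appeals to Proposition~\ref{prop:gpstruct}.

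The only real divergence is the boundary case $p\in\{k-2,k-1,k\}$. Your primary suggestion --- that primitivity plus a long prime cycle forces $\GA$ ``by classical results'' --- is not correct as stated: for instance $AGL(1,p)$ is primitive on $p$ points, contains a $p$-cycle, and is far from $\GAk{p}$. Your parenthetical alternative, re-running the argument of Proposition~\ref{prop:qcycle} with the roles of $\ssp$ and $\ttp$ swapped to extract a shorter prime cycle, is exactly what the paper does; but the paper then has to clean up by hand six residual configurations (essentially when \emph{both} runs yield primes in $\{k-2,k-1,k\}$, or when one side collapses to the identity) via explicit products of long cycles. So the ``technical nuisance'' you anticipate is real and is where the paper spends most of its effort.

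Incidentally, both you and the paper could bypass Jordan's theorem and the boundary case entirely here: Lemma~\ref{lem:conj} already shows that $H$ contains the normal closure of the $p$-cycle $\pi$ in $\pres{\ssp,\ttp}\supseteq\GA$; since $\GA$ is simple for $k\ge 5$ and $\pi\in\GA$ (the construction in Proposition~\ref{prop:qcycle} always ends with an odd-length cycle), that normal closure is already all of $\GA$.
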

 \begin{proof}

We apply Proposition~\ref{prop:qcycle}. If the obtained $p$-cycle  satisfies $p\leq k-3$ then we apply Theorem~\ref{thm:jordan} and get that our group is one of the three described above.\\
 Otherwise we can apply the same argument to $\ssp$ and conclude, except in the six cases where (with $d = \gcd{\ordr{\ssp},\ordr{\ttp}}$) $(\ordr\ssp^d, \ordr\ttp^d)$ is  $((k-2)^a,(k-1)^b)$, $((k-1)^a,k^b)$, $((k-2)^a,k^b)$, $(1,(k-2)^b)$, $(1,(k-1)^b)$, or  $(1, k^b)$, and $k=p$ is a prime greater than 5.
  Let us  deal with the first three cases.\\
By taking the suitable powers of $(\ssp,\ttp)^d$,  we can assume $a=b=1$ without loss of generality, and obtain the elements $(\idp,\hat\ssp)$ and $(\idp, \hat\ttp)$ of $\pres{\auta} $ with orders $p-2$ and $p-1$ (\resp $p-1$ and  $p$,  and $p-2$ and $p$). Then if the orders differ by one, we can construct $(\idp,(1,2)) \in \pres{\auta}$ by considering the product $(1,\hdots,p-2)(p-1,p-2,\hdots, 1)$ (\resp $(1,\hdots,p-1)(p,p-1,\hdots, 1)$), hence generating a group isomorphic to the direct product  $\{\idp\}\times \GS$. If the difference is 2 then we use the same trick to get the $3$-cycles. Since 3-cycles generate~$\GA$ we obtain  $\GA \times \GA \leq \pres{\auta}$.\\
In the  last three cases we have $\ssp^d = 1$ , with  $d \leq 2$. For $d=1$,  $\pres{\ssp,\ttp} \neq \GS,\GA$ which contradicts the hypothesis. For $d=2$, we can assume that, up to renaming, $\ttp=(1,2)(3,\hdots,p)$. Hence we can get $(\idp,(1,\hdots,p-3,p-2))$ and $(\idp,(1,\hdots,p-3,p-1))$ in $\pres{\auta}$. Then $(\idp,(1,\hdots,p-3,p-2))(\idp,(1,\hdots,p-3,p-1))=(\idp,(1,3,5,\hdots,p-2)(2,4,\hdots,p-3,p-1))$, so we can find a cycle of prime size smaller than $(p-1)/2$.\\
Finally, as $(\ssp, \ttp) \in \pres{\auta}$, by looking at its signature  we show that the expected group is contained in $\pres{\auta}$. We apply Proposition~\ref{prop:gpstruct}  as an upper bound  and get the result. 
 \qed \end{proof}
 
 To prove Theorem~\ref{thm:prob} we use the convergence of the logarithm of the order of a random permutation to a (continuous) Gaussian limit law~\cite{ErTuIII} to prove that two generic random permutations have different orders  and Dixon's theorem to prove that they generate either $\GA$ or $\GS$.

It would be interesting to compute a more precise asymptotic for this convergence. By Dixon's theorem we know that the probability for two random permutations to generate either $\GS$ or $\GA$ is $1-1/k+O(1/k^2)$. We are now interested in the probability for two random permutations to have the same order. It is clear that this probability admits $1/k^2$ for a lower bound (corresponding to the probability of drawing two $k$-cycles). A more precise lower bound is the probability that the permutations are conjugate, namely
   $W_1/k^2$ where $W_1 \approx 4.26340$ is an explicit constant, see~\cite{FlFuGoPaPo} and~\cite{BlBrWi}. Experimentally the probability that two random permutations have the same order seems to be of the same magnitude, hence we state the following conjecture:
   
\begin{conjecture}\label{conj:sameorder}
Let $\ssp, \ttp$ be two random permutations of $\GS$. Then \[\lim_{k \to \infty }\mathbb{P} \left( \os=\ot \right) = {K \over k^2}\:,\] with $W_1\leq K \leq 12$.
\end{conjecture}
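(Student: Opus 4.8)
The plan is to recast $\mathbb{P}(\os=\ot)$ as the collision sum $q(k):=\sum_{m\ge 1}\mathbb{P}_{\GS}(\os=m)^{2}$ and to sandwich $k^{2}q(k)$ between $W_1$ and $12$. For the lower bound, observe that two permutations of the same cycle type have the same order, whence $q(k)\ge\sum_{\lambda\vdash k}\mathbb{P}(\mathrm{type}=\lambda)^{2}=\sum_{\lambda\vdash k}z_\lambda^{-2}$, where $z_\lambda$ is the order of the centraliser of a permutation of type $\lambda$. The dominant partitions here are those of shape $(k-j)\cup\mu$ with $j$ bounded and $\mu\vdash j$, for which $z_{(k-j)\cup\mu}=(k-j)z_\mu$, and summing $\frac{1}{(k-j)^{2}z_\mu^{2}}\sim\frac{1}{k^{2}z_\mu^{2}}$ over all of them yields $k^{2}\sum_{\lambda\vdash k}z_\lambda^{-2}\to\prod_{i\ge1}I_0(2/i)=W_1$ — exactly the asymptotics of the conjugacy probability recalled from~\cite{FlFuGoPaPo,BlBrWi} — so that $\liminf_{k}k^{2}q(k)\ge W_1$.

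For the upper bound I would split the collision sum according to the largest prime factor $p$ of the common order $m$, writing $\mathrm{lpf}(m)$ for it. When $p>k/2$, a permutation of order $m$ has a single cycle of length divisible by $p$, that length equals $p$ (it is $\le k<2p$), $p$ divides $m$ exactly once, and the remaining $k-p$ points carry a permutation of order $m/p$ with $p\nmid m/p$; since conditioning on the presence of a $p$-cycle makes the remainder uniform over $S_{k-p}$ and $\mathbb{P}_{\GS}(\exists\,p\text{-cycle})=1/p$ for $k/2<p\le k$, one obtains the exact identity
\[
q(k)=\sum_{\substack{p\ \mathrm{prime}\\ k/2<p\le k}}\frac{q(k-p)}{p^{2}}\;+\;R(k),\qquad R(k):=\sum_{\substack{m\ge1\\ \mathrm{lpf}(m)\le k/2}}\mathbb{P}_{\GS}(\os=m)^{2}.
\]
Only the primes with $k-p$ bounded matter to leading order (for $k-p\ge k^{0.6}$ one has $q(k-p)=O(k^{-1.2})$ under an inductive bound $q(j)\le C/j^{2}$, and there are $O(k/\log k)$ such primes, for a total of $o(k^{-2})$), while for $k-p$ bounded $k^{2}/p^{2}\to1$; feeding $q(j)\le\min\{1,C/j^{2}\}$ back in, together with the observation that the diagonal part of $R(k)$ sums, over all residues, to the conjugacy constant $W_1$ and its off-diagonal part — pairs of types $(k-j)\cup\mu$ and $(k-j)\cup\mu'$ whose extra cycles have orders both dividing $k-j$ — is dominated by a convergent series, should give a self-improving estimate $k^{2}q(k)\le W_1+(\text{explicitly bounded correction})+o(1)$ with $C=12$ large enough to close the induction, the base cases $q(j)$ for small $j$ being checked computationally.

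I expect two points to be the genuine difficulty, and they explain why this remains a conjecture. First, $k^{2}q(k)$ truly oscillates with the divisibility pattern of $k,k-1,k-2,\dots$ — already the pair $(k-2,2)$ versus $(k-2,1,1)$ contributes a $\Theta(k^{-2})$ term exactly when $k$ is even — so there is no honest limit and one can only hope to bound $\limsup_{k}k^{2}q(k)$. Second, and more seriously, $R(k)$ is \emph{not} a negligible tail: the event $\mathrm{lpf}(\os)\le k/2$ has probability tending to $1$, so pinning down $R(k)$ to within the right multiple of $k^{-2}$ forces one to exploit that every individual $\mathbb{P}_{\GS}(\os=m)$ with $\mathrm{lpf}(m)\le k/2$ is $o(1/k)$ while still keeping precise track of how many such $m$ contribute; a crude execution of this bookkeeping produces a constant much larger than $12$, and squeezing it down to $12$ (I suspect the true value of $K$ is in fact close to $W_1$) is the main obstacle.
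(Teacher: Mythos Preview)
The statement is a \emph{conjecture}, and the paper does not prove it. The only justification offered is the lower bound $W_1/k^{2}$ coming from the probability that two uniform permutations are conjugate (citing \cite{FlFuGoPaPo,BlBrWi}), together with the sentence ``Experimentally the probability that two random permutations have the same order seems to be of the same magnitude''. Your first paragraph reproduces exactly that lower-bound argument, so on that part you match the paper.

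Your attempt at the upper bound is not a proof, as you yourself acknowledge: the term $R(k)$ carries almost all of the mass, and you give no argument showing it is $O(k^{-2})$, let alone $\le 12/k^{2}$. The recursive/inductive scheme you outline (assume $q(j)\le C/j^{2}$ and feed it back through the decomposition by largest prime factor of the order) is a plausible line of attack, but the step ``should give a self-improving estimate \ldots\ with $C=12$ large enough to close the induction'' is precisely the missing content, and nothing in the paper fills that gap either. So there is no proof on either side to compare.

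One genuinely useful observation in your write-up goes beyond the paper: your remark that $k^{2}q(k)$ oscillates with the divisibility properties of $k,k-1,k-2,\dots$ (already the pair of types $(k-2,2)$ and $(k-2,1,1)$ contributes a $\Theta(k^{-2})$ cross term exactly when $k$ is even) casts doubt on the existence of the limit as written. If that is correct, the conjecture would have to be reformulated as a statement about $\liminf_{k} k^{2}q(k)$ and $\limsup_{k} k^{2}q(k)$ lying in $[W_1,12]$, rather than about a limit equal to a single constant~$K$.
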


\begin{remark}
The hypothesis ``the permutations have different orders" is sufficient but not necessary: for instance,   $ \ssp = (1,6,7,3,12,5)(2,8)(9,11) $ and $\ttp=(1,9,8)(3,5,7,6,10,11)(4,12)$ satisfy $ \sgn{\ssp,\ttp} = (-1, 1)$ and  $\ordr{\ssp}=\ordr{\ttp}= 6$, but the generated  group is $\GSk{12}\times\GSk{12}$, and other examples can be found for the other  groups described.\\
On the other hand, one can find examples of a cyclic automaton with two permutations with the same order (generating either the symmetric or the alternating group), that generate none of the three expected groups.\\
A necessary and sufficient, but yet ineffective, hypothesis would be that some product $\rho$ of $\ssp$ and $\ttp$  has an order different from the order of ${{\rho}}$ defined by applying $\ssp \mapsto \ttp$ and $\ttp \mapsto \ssp$ on $\rho\in \{\ssp,\ttp\}^*$, and $\GA \leq \pres{\ssp,\ttp}$.
\end{remark}

\begin{remark}
This theorem, likewise Dixon's theorem, does not generalise to semigroups. Indeed to generate the whole transformation semigroup one need to pick at least two permutations, which is rare: the probability of drawing one permutation is $k!/k^k \sim_\infty \sqrt{2\pi k }e^{-k} \to_\infty 0$.
Numerical simulations performed using GAP system~\cite{GAP4} do not suggest any obvious convergence to a set of semigroups.
\end{remark}

Using~\cite{East-Nordahl:j} we can also say a bit about another way of generating random groups:
\begin{proposition}
Let $\ssp$ and $\ttp$ be two random permutations of $\GS$, $k \geq 5$. Let $\auta$ be the disjoint union of two cyclic automata,  with output functions~$\ssp$ and~$\ssp^{-1}$ for one, and  $\ttp$  and $\ttp^{-1}$ for the other. Then, generically,

 \[
\pres{\auta}=  \begin{cases}  (\GA\times \GA)\rtimes \pres{(\pi,\pi)} & \text{ if one of the permutation is odd }\\ 
  
 \GA\times \GA  & \text{ otherwise.}
 \end{cases}
 \]
 With $\pi$ an arbitrary transposition.
\end{proposition}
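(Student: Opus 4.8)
The plan is to reduce this to the union analysis already developed, treating each of the two cyclic automata as a $2$-state automaton. Consider the automaton built from $\ssp$ and $\ssp^{-1}$: it is a $2$-state cyclic automaton with $\rho_0=\ssp$, $\rho_1=\ssp^{-1}$, so by Proposition~\ref{prop:subgp} it circularly generates a subgroup of $\GS\times\GS$ sitting inside $\pres{\ssp}\times\pres{\ssp}$. The key structural point is that $\presc{(\ssp,\ssp^{-1})}$ is exactly the group of pairs $(\ssp^i,\ssp^{-i})$ together with $(\ssp^i,\ssp^{1-i})$, i.e. it is cyclic of order $2\,\os$ (or $\os$ when $\ssp$ is an involution), and in particular it contains $(\idp,\ssp^{\os})=(\idp,\idp)$ trivially but, more usefully, nothing of the form $(\idp,\rho)$ with $\rho\neq\idp$ unless $\os$ divides $i$ in one coordinate and not the other — which forces $\rho=\idp$. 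So a single such pair-automaton contributes only an abelian (indeed cyclic) piece on its own; the interesting group comes from the interaction in the disjoint union.

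The heart of the argument is then the same incidence/${\rm lcm}$ bookkeeping as in Proposition~\ref{prop:union}, specialised to $m=2$, $n_1=n_2=2$. The disjoint union of the two automata is handled by passing to the dual and grouping letters: the generated group embeds in $\GS^{\,\lcm(2,2)}=\GS^2$, and the maximal "first-coordinate-trivial" subgroup is controlled by the $p$-cycle machinery of Section~\ref{sec-2case}. Concretely, I would argue: since $\ssp,\ttp$ are \emph{random}, by Dixon's Theorem~\ref{thm:dixon} generically $\pres{\ssp}=\pres{\ssp,\ssp^{-1}}$ and $\pres{\ttp}=\pres{\ttp,\ttp^{-1}}$ are not helpful directly, but the union's generating tuple realises, after taking suitable powers as in the proof of Proposition~\ref{prop:qcycle}, an element of the form $(\idp,\pi)$ with $\pi$ a $p$-cycle of prime length $p\le k-3$ in the $\GS^2$ picture — here using that $\ssp^{-1}$ and $\ttp^{-1}$ have the same orders as $\ssp$ and $\ttp$, so the coprimality extraction $d=\gcd(\os,\ot)$, $p\mid\ot^d$, $p\nmid\os^d$ goes through verbatim once we also invoke that generically $\os\neq\ot$ (Conjecture-adjacent fact proved via the Gaussian limit law of~\cite{ErTuIII}). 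Combined with Proposition~\ref{prop:primitive}, Lemma~\ref{lem:conj} and Jordan's Theorem~\ref{thm:jordan}, this forces the first-coordinate-trivial subgroup to be $\GA$ or $\GS$; but since only inverses are available as extra generators, no new odd permutation beyond those already present in $\pres{\ssp,\ttp}$ enters, so in fact it is $\GA$. Symmetrically on the second coordinate. This is where the cited result of East--Nordahl~\cite{East-Nordahl:j} is used: it identifies which elements of $\GS\times\GS$ are circularly generated by $(\ssp,\ssp^{-1})$-type tuples, pinning the group down to $\GA^2$ up to the signature correction.

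The signature correction is the final step and is routine: the generating tuples contain $(\ssp,\ssp^{-1})$ and $(\ttp,\ttp^{-1})$ (in the appropriate $\GS^2$ coordinates), and $\sgn{\ssp}=\sgn{\ssp^{-1}}$, so $\sgnpi{\ssp,\ssp^{-1}}=(\pi,\pi)$ precisely when $\ssp$ is odd and $(\idp,\idp)$ when $\ssp$ is even, and likewise for $\ttp$. Feeding this into Proposition~\ref{prop:gpstruct} (or rather its $m=2$ union analogue, Proposition~\ref{prop:union}) gives the upper bound $\GA^2\rtimes\presc{\sgnpi{\cdot}}$, and the preceding paragraph gives the matching lower bound; since $\presc{(\pi,\pi)}=\pres{(\pi,\pi)}$ here and all such groups are conjugate, we land in the stated dichotomy: $(\GA\times\GA)\rtimes\pres{(\pi,\pi)}$ if at least one of $\ssp,\ttp$ is odd, and $\GA\times\GA$ if both are even.

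The main obstacle, I expect, is \emph{not} the signature bookkeeping but verifying that the $p$-cycle extraction of Proposition~\ref{prop:qcycle} survives the passage to the disjoint-union automaton with inverse-pair outputs — i.e. checking that the element $(\idp,\check\ttp)$ one produces genuinely has a trivial component in the right coordinate of the $\GS^2$ realisation of the union (and not merely in the $4$-tuple $(\ssp,\ssp^{-1},\ttp,\ttp^{-1})$ before the $\lcm$-collapse). This requires being careful about how the $\lcm(2,2)=2$ identification merges coordinates from the two automata, and it is precisely the point where the structural description from~\cite{East-Nordahl:j} of $\presc{(\ssp,\ssp^{-1})}$ is indispensable; once that identification is set up correctly, the rest follows the $2$-state template of Section~\ref{sec-2case} with only cosmetic changes.
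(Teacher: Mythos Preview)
Your approach has a genuine gap at precisely the point you flag as ``the main obstacle''. In the $\GS\times\GS$ realisation the generators are $(\ssp,\ssp^{-1})$ and $(\ttp,\ttp^{-1})$, each with components of \emph{equal} order; hence no power of either yields $(\idp,\text{nontrivial})$. The same obstruction persists for the obvious products you would try next: for instance $(\ssp,\ssp^{-1})(\ttp,\ttp^{-1})=(\ssp\ttp,\ssp^{-1}\ttp^{-1})=(\ssp\ttp,(\ttp\ssp)^{-1})$, and since $\ttp\ssp$ is conjugate to $\ssp\ttp$ the two components again have the same order. So the ``coprimality extraction $d=\gcd(\os,\ot)$'' of Proposition~\ref{prop:qcycle} has nothing to act on --- there is no element in sight playing the role of the pair $(\ssp,\ttp)$ with $\os\neq\ot$, and your generic hypothesis $\os\neq\ot$ is in fact irrelevant here (the paper's proof does not use it). Of course unequal-order elements \emph{exist} in the group --- that is the conclusion --- but your method does not exhibit one, and exhibiting one is the whole problem.

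The paper's argument is much shorter and structurally different: the East--Nordahl reference is the \emph{main} engine, not a technical aside about $\presc{(\ssp,\ssp^{-1})}$ in isolation. Once Dixon's theorem gives $G:=\pres{\ssp,\ttp}\in\{\GS,\GA\}$ generically, Lemma~2.4 and Example~2.7 of~\cite{East-Nordahl:j} identify the subgroup of $G\times G$ generated by $\{(g,g^{-1}):g\text{ a generator of }G\}$ directly; the key algebraic input is that this subgroup contains $[G,G]\times[G,G]$, and the remark $[\GA,\GA]=\GA$ for $k\geq 5$ (which you do not mention, and which replaces all of the Jordan/primitivity work) is what forces $\GA\times\GA\leq\pres{\auta}$. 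The signature bookkeeping then finishes exactly as you describe. None of the $p$-cycle machinery of Section~\ref{sec-2case} is invoked.
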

\begin{proof}
We have $\pres{\auta}= \presc{(\ssp,\ssp^{-1}), (\ttp,\ttp^{-1})}$.
It is a direct consequence of Dixon's theorem ($\pres{\ssp,\ttp}  = \GS, \GA$ generically) and Lemma 2.4 and Example 2.7 of~\cite{East-Nordahl:j}, with the remark that $[\GA,\GA]=\GA$ for $k\geq 5$.

\qed \end{proof}

In contrast,  the group $\presc{(\ssp,\ttp^{-1}),(\ssp^{-1},\ttp)}$ is equal to $\presc{(\ssp,\ttp^{-1})}$, and follows the behaviour described in Theorem~\ref{thm:prob}.

\section{Cyclic Automata with any Number of States}\label{sec-genecase}
For the general case we need more conditions on the orders to find an isolated, prime-sized cycle. In order to conclude, we require that the tuple of the orders of the permutations has no periodic pattern.

Recall that a primitive word is a word that cannot be expressed as a power of a shorter word.
\begin{restatable}{proposition}{propxxqcyclen}
\label{prop:qcyclen}
	Let $(\ssp_i)_i$  be $n$ permutations of $\GS$, $k \geq 7$, such that the tuple $(\ordr{\ssp_0}, \hdots, \ordr{\ssp_{n-1}})$ is primitive and  $\pres{(\ssp_i)_i}= \GS \text{ or } \GA$.  Then there exists a prime-sized cycle $\pi$ satisfying $(\idp,\idp,\hdots,\idp, \pi) \in  \presc{(\ssp_i)_i}$.
\end{restatable}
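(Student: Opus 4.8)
The plan is to generalize the proof of Proposition~\ref{prop:qcycle} to $n$ states, using the primitivity of the order-tuple in place of the hypothesis $\ordr\ssp\neq\ordr\ttp$. First I would set $d=\gcd(\ordr{\ssp_0},\dots,\ordr{\ssp_{n-1}})$ and replace the tuple $(\ssp_i)_i$ by its $d$-th power $(\ssp_i^d)_i$, so that the orders $\ordr{\ssp_i^d}$ are globally coprime (their gcd is $1$). If all $\ssp_i^d$ were trivial, then the original orders would all divide $d$, hence all equal $d$, contradicting primitivity of the tuple (a constant tuple of length $n\ge 2$ is a proper power); so some coordinate, say the last after a circular shift, satisfies $\ssp_{n-1}^d\neq\idp$. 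Pick a prime $p\mid\ordr{\ssp_{n-1}^d}$; since the gcd of all the orders $\ordr{\ssp_i^d}$ is $1$, there is at least one coordinate whose order is coprime to $p$ — but I need more: I want a \emph{circular shift} of the tuple in which the last entry has order divisible by $p$ and \emph{every other} entry has order coprime to $p$, which need not be directly available. This is where primitivity is used more carefully: the set $S_p=\{i : p\mid\ordr{\ssp_i^d}\}$ is a proper nonempty subset of $\Z/n\Z$ (proper because the global gcd is $1$), so there is an index $i_0\in S_p$ with $i_0-1\notin S_p$; after a circular shift sending $i_0$ to position $n-1$, I am not yet done, because positions other than the one just before $i_0$ may still lie in $S_p$.

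To fix this, after shifting I would take an appropriate power to kill the $p$-part of the orders of all coordinates except the last. Concretely, writing $\ordr{\ssp_{n-1}}=p^a m$ with $p\nmid m$ in the shifted tuple, raise everything to the power $dp^{a-1}$ times $\prod(\text{$p$-free parts of the other orders})$; choosing the exponent as the lcm of what is needed makes every coordinate except the last have order coprime to $p$, while the last has order exactly $p$ times something, and then the computation of Proposition~\ref{prop:qcycle} goes through verbatim: raising $(\hat\ssp_0,\dots,\hat\ssp_{n-1})$ to the power $\bigl(\prod_i\ordr{\hat\ssp_i}\bigr)/p$ yields $(\idp,\dots,\idp,\check\ttp)$ with $\check\ttp$ of order $p$. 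The only subtlety is that this kind of exponent manipulation must be done so that the last coordinate retains $p$ in its order — i.e. I must clear $p$ only from the \emph{other} coordinates, which is possible precisely because those orders are each coprime to $p$ after the careful choice of shift, or, failing that, by first extracting the $p'$-part of each of them by an exponentiation that is itself coprime to $p$. Let me restate this more robustly: from $(\ssp_i^d)_i$, let $e$ be the product of the $p'$-parts of $\ordr{\ssp_i^d}$ over all $i\neq n-1$ (in the shifted tuple), and also multiply in $p^{a-1}$; then $(\ssp_i^{de})_i$ has $n-1$ trivial coordinates outside position $n-1$? No — $e$ need not be a multiple of each $\ordr{\ssp_i^d}$. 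Instead, use $e'=\operatorname{lcm}_{i\neq n-1}\ordr{\ssp_i^d}$ multiplied by $p^{a-1}$: this kills coordinates $i\neq n-1$ entirely while leaving the last coordinate of order divisible by $p$ (since $p\nmid e'$, as $p$ divides none of those orders once we shifted so that $S_p=\{n-1\}$ — wait, $S_p$ need not be a singleton).

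Given that wrinkle, the cleanest route is iterative: handle one prime at a time and one "bad" coordinate at a time. Using primitivity, find a prime $p$ and a circular shift so that position $n-1$ is in $S_p$ but position $n-2$ is not (possible since $S_p\neq\Z/n\Z$); then kill coordinate $n-2$ by raising to the $p'$-part-clearing power, which does not affect that $p\mid\ordr{\ssp_{n-1}}$; repeat downward to clear all coordinates except the last. Each step only needs that the coordinate being cleared currently has order coprime to $p$, which I can arrange by always working leftward from the last coordinate — an induction on the number of coordinates still to be cleared. Once $(\idp,\dots,\idp,\check\ttp)$ with $\ordr{\check\ttp}=p$ is obtained, I invoke the case analysis of Proposition~\ref{prop:qcycle} (using $k\ge 7$ instead of $k>4$, which only gives more room) to pass from a product of disjoint $p$-cycles to a single prime-sized cycle $\pi$, conjugating inside $\pres{(\ssp_i)_i}=\GS$ or $\GA$ via Lemma~\ref{lem:conj} extended componentwise. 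The main obstacle is exactly the bookkeeping in the middle paragraph: ensuring that the successive exponentiations that trivialize the unwanted coordinates never strip the prime $p$ from the surviving last coordinate, and that primitivity is invoked correctly to rule out the degenerate all-trivial case — this is where the hypothesis that the order-tuple is primitive does real work, since for a non-primitive tuple the "killing" exponentiation could simultaneously trivialize the last coordinate, leaving nothing.
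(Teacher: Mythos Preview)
Your approach has a genuine gap that you yourself flag but do not close. The iterative ``clear one coordinate at a time by exponentiation'' scheme breaks down whenever the set $S_p=\{i:p\mid\ordr{\ssp_i^d}\}$ has more than one element, which is the typical situation for $n\ge3$. After shifting so that $n-1\in S_p$ and $n-2\notin S_p$, you can indeed raise to an exponent coprime to $p$ that kills coordinate $n-2$ without touching the $p$-part of coordinate $n-1$; but as you move leftward you will eventually hit some coordinate $j\in S_p$ with $j\neq n-1$, and then no exponent coprime to $p$ can trivialize it (its order stays divisible by $p$), while any exponent divisible by enough powers of $p$ simultaneously strips the $p$-part from coordinate $n-1$. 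The sentence ``which I can arrange by always working leftward from the last coordinate'' is not justified: the order in which you process coordinates does nothing to make coordinate~$j$'s order coprime to $p$. For instance, with $n=3$ and order-tuple $(6,10,15)$ (primitive, realizable for $k\ge 8$), one has $d=1$, $S_2=\{0,1\}$, $S_3=\{0,2\}$, $S_5=\{1,2\}$; no prime gives a singleton, and raising any single generator of $\presc{(\ssp_0,\ssp_1,\ssp_2)}$ to a power can never produce a tuple with exactly one non-trivial coordinate.

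The paper's proof addresses precisely this difficulty, and it is substantially more intricate than a direct generalization of Proposition~\ref{prop:qcycle}. The missing ingredient is \emph{products of different circular shifts}, combined with conjugation inside $\pres{(\ssp_i)_i}$, to change the prime controlling the non-trivial coordinates. Roughly: one first passes to a tuple with all orders in $\{1,p\}$; one then multiplies it by a shifted-and-conjugated copy chosen so that at one designated position the product of two $p$-cycles has order a \emph{different} prime $r$, while elsewhere orders remain powers of $p$; raising to a suitable $p$-power then yields a new tuple with orders in $\{1,r\}$ and strictly fewer non-trivial coordinates, now separated by trivial ones. An induction on the size of the support --- together with a case split according to whether the resulting $\{1,p\}$-pattern is itself a primitive word over $\{1,p\}$ --- finishes. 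Your invocation of primitivity is also too weak: you only extract ``not all orders equal'', whereas the argument genuinely needs aperiodicity of the pattern to guarantee that these shift-and-multiply moves keep strictly shrinking the support.
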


Now we have: 
\begin{theorem}\label{thm:gene}
Let $\auta_k$ be a  random $n$-state $k$-letter  cyclic automaton with output functions~$(\ssp_0, \hdots, \ssp_{n-1})$.  Then

\[
\lim_{k \to \infty}\mathbb{P} \Big( \pres{\auta_k}=  
  \GA^n \rtimes \presc{\sgnpi{\ssp_0, \hdots, \ssp_{n-1}}}   
\Big) = 1\:.
 \]

\end{theorem}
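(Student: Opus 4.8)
The plan is to reduce Theorem~\ref{thm:gene} to the deterministic statement of Proposition~\ref{prop:qcyclen} combined with the Jordan-style arguments already developed for the two-state case, and then to show that the hypotheses needed hold with probability tending to $1$. Concretely, I would first argue that it suffices to prove the following deterministic claim: if $(\ssp_0,\dots,\ssp_{n-1})$ are such that $\pres{(\ssp_i)_i}=\GS$ or $\GA$ and the tuple of orders $(\ordr{\ssp_0},\dots,\ordr{\ssp_{n-1}})$ is primitive, then $\pres{\auta}=\GA^n\rtimes\presc{\sgnpi{\ssp_0,\dots,\ssp_{n-1}}}$ (for $k$ large enough). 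Granting this claim, the theorem follows because, by Dixon's theorem (Theorem~\ref{thm:dixon}) applied to the first two coordinates, $\pres{(\ssp_i)_i}$ is generically $\GS$ or $\GA$; and, using the Gaussian limit law for $\log\ordr{\ssp}$ of~\cite{ErTuIII} as in the two-state case, the probability that the order tuple is \emph{not} primitive is dominated by $\sum_{d\mid n, d<n}\mathbb{P}(\ordr{\ssp_0}=\ordr{\ssp_d})$, each term of which is $o(1)$ because two independent random permutations have distinct orders with probability tending to $1$. A union bound over the finitely many proper divisors of $n$ then gives primitivity with probability $1-o(1)$.

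For the deterministic claim, the upper bound $\pres{\auta}\leq \GA^n\rtimes\presc{\sgnpi{\ssp_0,\dots,\ssp_{n-1}}}$ is exactly Proposition~\ref{prop:gpstruct}, so all the work is in the matching lower bound. Proposition~\ref{prop:qcyclen} supplies a prime-sized cycle $\pi$ with $(\idp,\dots,\idp,\pi)\in\presc{(\ssp_i)_i}$. I would then need the $n$-state analogues of Lemma~\ref{lem:conj} and Proposition~\ref{prop:primitive}: conjugating $(\idp,\dots,\idp,\pi)$ by elements $(\idp,\dots,\idp,\rho)$ — which lie in the group since $\GA\leq\pres{(\ssp_i)_i}$ forces, via the circular generation, all tuples with trivial first $n-1$ coordinates and a coordinate in $\GA$ (respectively $\GS$) on the last slot to be reachable by the same conjugation trick used in the two-state proof — one gets that the subgroup $\{e\}^{n-1}\times H$ of $\pres{\auta}$ has $H$ containing all $\GS$-conjugates (or $\GA$-conjugates) of $\pi$. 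Since $p\leq k-3$ (which is where the $k\geq 7$ and the argument of Proposition~\ref{prop:qcyclen} guarantee a suitable prime) and $G_\pi(\GS)$, $G_\pi(\GA)$ are primitive by Proposition~\ref{prop:primitive}, Jordan's theorem (Theorem~\ref{thm:jordan}) gives $H=\GS$ or $\GA$ according to the signature. By symmetry of the circular generation one obtains $\GA$ (resp.\ $\GS$) on \emph{every} coordinate slot, hence $\GA^n\leq\pres{\auta}$; finally the tuple $(\ssp_0,\dots,\ssp_{n-1})$ itself, being in the group, supplies the coset representatives realizing $\presc{\sgnpi{\ssp_0,\dots,\ssp_{n-1}}}$ on top of $\GA^n$, which closes the lower bound.

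The step I expect to be the genuine obstacle is Proposition~\ref{prop:qcyclen} — the extraction of an \emph{isolated} prime-sized cycle on the last coordinate — since this is the place where the combinatorics genuinely grows with $n$: in the two-state case (Proposition~\ref{prop:qcycle}) one only had to kill one coordinate using coprimality of $\ordr{\ssp^d}$ and $\ordr{\ttp^d}$, whereas with $n$ coordinates one must iteratively annihilate $n-1$ of them, and the primitivity hypothesis on the order tuple is precisely what prevents the obstruction where the orders repeat periodically and no coordinate can be singled out. Assuming Proposition~\ref{prop:qcyclen} (which the paper states and defers), the remaining ingredients — the $n$-ary versions of Lemmas~\ref{lem:conj} and the use of Jordan — are essentially bookkeeping over the cyclic symmetry of the generating tuple, and the probabilistic half is a routine union bound over divisors of $n$ using the two-state order-distinctness estimate already invoked for Theorem~\ref{thm:prob}.
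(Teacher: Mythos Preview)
Your proposal is correct and follows essentially the same route as the paper: Dixon's theorem plus primitivity of the order tuple (via a union bound on coincidences of orders, which goes to $0$ since $n$ is fixed) reduces to the deterministic situation, and then Proposition~\ref{prop:qcyclen} supplies the isolated $p$-cycle while the argument of Theorem~\ref{thm:2} (conjugation, Proposition~\ref{prop:primitive}, Jordan, and the upper bound of Proposition~\ref{prop:gpstruct}) closes the lower bound. One small wrinkle in your write-up: you do not need $(\idp,\dots,\idp,\rho)$ itself to lie in the group to run the conjugation step --- as in Lemma~\ref{lem:conj}, any element of $\presc{(\ssp_i)_i}$ having $\rho$ on the last coordinate (and arbitrary entries elsewhere) already conjugates $(\idp,\dots,\idp,\pi)$ to $(\idp,\dots,\idp,\pi^\rho)$, so the argument is not circular.
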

\begin{proof}
By Dixon's theorem we can assume that $\pres{(\ssp_i)_i}= \GS \text{ or } \GA$.
If the tuple of orders is primitive then using Proposition~\ref{prop:qcyclen} we get an element having  a $p$-cycle on one coordinate and trivial permutations elsewhere, and, as in Theorem~\ref{thm:2} we obtain the expected result. Now the probability that the tuple of orders is primitive is less than the probability that $\ordr{\ssp_0} = \ordr{\ssp_i}$ holds for some $i$, which goes to  $0$ as $k$ goes to infinity, since $n$ is fixed.
\qed \end{proof}

Once again the signatures control the generated group, so we can deduce the probability of generating each group. Note that there is at most $2^{n}$ groups and that some of them are isomorphic.
\section{Conclusion and Perspectives}
\label{sec:ccl}

We showed that the model of random cyclic automaton groups presents little diversity and hence cannot be used in order to provide an efficient model of random groups. The natural extension is now to deal with other types of Mealy automata. The closest step -- the whole class of automata with cycles without exit -- also seems to provide a Dixon-like property. 
One can  prove another analogue to Dixon's Theorem in  two other subclasses: paths ending with a loop and trees with loops on leaves.
\begin{restatable}{proposition}{propxxlinear}
\label{prop:linear}
Let $\auta=(Q,\Sigma,\delta, \rho)$ be an   $n$-state $k$-letter  automaton with $ Q= \{0,\hdots,{n-1}\}$, such that $\delta_u(q)=\min{(q+1, n-1)}, \forall q \in Q, u\in \Sigma$. Then 
\[
\lim_{k \to \infty}\mathbb{P} \big( \pres{\auta_k} =  
  (\GA^{n-1}\times \pres{\rho_{n-1}}) \rtimes P\big) = 1 \:,
 \]
 where $P=\pres{\{(\sgnpi{\rho_i},\hdots, \sgnpi{\rho_{n-1}},\hdots, \sgnpi{\rho_{n-1}})\}_i} $.
\end{restatable}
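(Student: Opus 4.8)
The plan is to mimic the structure of the proof of Theorem~\ref{thm:gene}, reducing the ``path ending in a loop'' case to the cyclic case already handled, together with the analysis of the single absorbing state $n-1$ which carries a loop rather than a cycle. First I would fix notation: write $\rho_i$ for the output permutation at state $i$, and observe that, exactly as in the proof of Proposition~\ref{prop:subgp}, the action of state $q$ on a word $\mot{s}=s_0\cdots s_\ell$ reads $\rho_q(\mot{s}) = \rho_q(s_0)\rho_{\delta_{s_0}(q)}(s_1)\cdots$, where now $\delta_u(q)=\min(q+1,n-1)$. Hence state $q$ acts as the tuple $(\rho_q,\rho_{q+1},\hdots,\rho_{n-2},\rho_{n-1},\rho_{n-1},\hdots)$ — it walks up the path and then gets stuck repeating $\rho_{n-1}$ forever. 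Working with infinite input words (which does not change the generated group, as noted for the dual), the action of state $q$ is the tuple $(\rho_q,\hdots,\rho_{n-2})$ on the first $n-1-q$ positions followed by $\rho_{n-1}$ on all remaining positions; grouping the tail positions together, $\pres{\auta}$ embeds in $\GS^{n-1}\times\pres{\rho_{n-1}}$, with the last factor acting diagonally on the tail. This already gives the ambient group $\GS^{n-1}\times\pres{\rho_{n-1}}$, and restricting signatures gives the upper bound $(\GA^{n-1}\times\pres{\rho_{n-1}})\rtimes P$ with $P$ as in the statement, by the same argument as Proposition~\ref{prop:gpstruct}.

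Next I would prove the generic lower bound. By Dixon's theorem we may assume $\pres{\rho_0,\hdots,\rho_{n-1}}$ is $\GS$ or $\GA$, and — since $n$ is fixed — we may also assume that the orders $\ordr{\rho_0},\hdots,\ordr{\rho_{n-1}}$ are pairwise distinct (the complementary event has probability $o(1)$ as $k\to\infty$, by the Gaussian limit law for $\log\ordr{\ssp}$ used in Theorem~\ref{thm:prob}). The key point is to produce, for each coordinate $j\in\{0,\hdots,n-2\}$, an element of $\pres{\auta}$ that is a prime-length cycle on coordinate $j$ and trivial on every other path-coordinate and on the tail. To do this I would use the generator corresponding to state $j$, which acts as $(\rho_j,\rho_{j+1},\hdots,\rho_{n-2};\rho_{n-1}^{\mathrm{tail}})$: this is a tuple of permutations of the same flavour as in Proposition~\ref{prop:qcyclen}, and because all orders are distinct, the tuple of orders restricted to the ``active'' coordinates $(\rho_j,\hdots,\rho_{n-1})$ is primitive. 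Applying (the argument behind) Proposition~\ref{prop:qcyclen} — taking suitable powers and products of conjugates of this generator — yields $(\idp,\hdots,\idp,\pi_j)$ with $\pi_j$ a $p$-cycle ($p$ prime, $p\le k-3$ generically) sitting on coordinate $j$. By Lemma~\ref{lem:conj} applied coordinatewise together with Proposition~\ref{prop:primitive} and Jordan's Theorem~\ref{thm:jordan}, conjugating by the copy of $\pres{\rho_0,\hdots,\rho_{n-1}}=\GS$ or $\GA$ acting on that coordinate fills in the whole alternating group $\GA$ (resp. $\GS$) on coordinate $j$ while keeping all other path-coordinates trivial. Ranging over $j=0,\hdots,n-2$ gives $\GA^{n-1}\times\{\idp\}\le\pres{\auta}$ (as a subgroup acting trivially on the tail). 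Finally, the original generators carry the correct signature data and the diagonal $\pres{\rho_{n-1}}$-action on the tail, so together with $\GA^{n-1}$ they generate exactly $(\GA^{n-1}\times\pres{\rho_{n-1}})\rtimes P$, matching the upper bound.

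The main obstacle I expect is the bookkeeping around the absorbing tail coordinate: unlike the purely cyclic case, powering and conjugating the state-$j$ generator affects the tail (via $\rho_{n-1}$) as well, so when I extract $(\idp,\hdots,\idp,\pi_j)$ I must be careful that the construction of Proposition~\ref{prop:qcyclen} — which already cancels all but the last active coordinate — also cancels the tail, i.e. I should treat the tail as ``the last active coordinate'' and arrange for the prime-cycle to land on coordinate $j$ rather than on the tail. This is possible precisely because $\ordr{\rho_j}$ is generically distinct from, and can be chosen to contribute a prime not dividing, $\lcm(\ordr{\rho_{j+1}},\hdots,\ordr{\rho_{n-1}})$; a small case analysis parallel to that in Proposition~\ref{prop:qcycle} (the $p=2$ subtleties and the few exceptional order patterns near $k-2,k-1,k$) handles the remaining boundary cases, all of which have probability $o(1)$. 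Once the prime cycle is isolated on coordinate $j$, the rest is a routine application of the Jordan–primitivity machinery already assembled in Section~\ref{sec-2case}, applied coordinate by coordinate, and then a signature computation to pin down $P$ exactly. A secondary, purely cosmetic point is to justify rigorously that passing to infinite input words — equivalently, taking a high power of the dual automaton — does not change $\pres{\auta}$; this is standard and already invoked in the commented remark after Proposition~\ref{prop:subgp}.
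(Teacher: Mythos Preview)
Your opening step---identifying the action of state $q$ with the tuple $(\rho_q,\rho_{q+1},\hdots,\rho_{n-1},\rho_{n-1},\hdots)$ and thereby realising $\pres{\auta}$ as the subgroup of $\GS^n$ generated by
\[
g_i=(\rho_i,\rho_{i+1},\hdots,\rho_{n-1},\underbrace{\rho_{n-1},\hdots,\rho_{n-1}}_{i})\qquad(0\le i\le n-1)
\]
---is exactly what the paper does. The divergence is in how you obtain the lower bound.

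The paper does \emph{not} reinvoke the cyclic machinery of Proposition~\ref{prop:qcyclen}. Instead it exploits the triangular shape of the $g_i$ directly: two consecutive generators $g_i$ and $g_{i+1}$ agree on all coordinates $\ge n-1-i$, so in particular $g_{n-2}g_{n-1}^{-1}=(\rho_{n-2}\rho_{n-1}^{-1},\idp,\hdots,\idp)$ is already supported on a single coordinate, with no powering, no prime--cycle extraction, and no tail bookkeeping. Conjugating this element by the $g_j$'s---which on coordinate~$0$ run through $\rho_0,\hdots,\rho_{n-1}$---and invoking Dixon's theorem plus the simplicity of $\GA$ gives $\GA\times\{\idp\}^{n-1}\le\pres{\auta}$ immediately. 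One then iterates along the path (using $g_{n-3}g_{n-2}^{-1}$, etc.)\ to fill in the remaining coordinates. This is what the paper means by ``whence the result by Dixon's theorem and conjugation''.

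Your route---assuming pairwise distinct orders and rerunning the isolation argument of Proposition~\ref{prop:qcyclen} coordinate by coordinate---would also reach the conclusion, but it buys nothing and costs something. It requires the extra generic hypothesis you flag (that for each $j$ the order $\ordr{\rho_j}$ contributes a prime not dividing $\lcm(\ordr{\rho_{j+1}},\hdots,\ordr{\rho_{n-1}})$), which is not obviously of probability $1-o(1)$ and which you would have to justify separately; and it forces you to wrestle with the absorbing tail, as you yourself anticipate. The paper's argument needs none of this: the path structure hands you a one--coordinate element for free, so Dixon and the simplicity of $\GA$ suffice, with no hypothesis on the orders beyond what Dixon already uses.
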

\begin{restatable}{proposition}{propxxconverging}\label{prop:converging}
Let $\auta=(Q,\Sigma,\delta, \rho)$ be an   $n$-state $k$-letter  automaton with $ Q$  labelled by words in $ \{0,\hdots, a-1\}^{d}$ (with $a$ and $d$ integers respectively called arity and depth of the automaton), such that $\: \delta_u(q_{x_0,\hdots, x_i,x_{i+1}})=q_{x_0,\hdots, x_i}, \forall x_0,\hdots x_{i+1} \in \{0,\hdots, a-1\}^{d}, u\in \Sigma$. Then 
\[
\lim_{k \to \infty}\mathbb{P} \big( \pres{\auta_k}=  
  (\GA^{n-1}\times P \big) = 1  \:,
 \]
  where $P=\pres{\rho_{q_{\epsilon}}}\rtimes \pres{\{(\sgnpi{\rho_{u_{i}u_{i-1}\hdots}},\hdots, \sgnpi{\rho_{q_{\epsilon}}},\hdots, \sgnpi{\rho_{q_{\epsilon}}})\}_i} $.
\end{restatable}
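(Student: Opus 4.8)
The plan is to transport to the branching setting the argument that proves Theorem~\ref{thm:gene} and Proposition~\ref{prop:linear}: first I would pin down the structure of $\pres{\auta}$ as a subgroup of a product of symmetric groups, and then show that, outside an event whose probability tends to $0$, this upper bound is attained.

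Since $\delta_u$ does not depend on the input letter, reading $m$ letters from a state $q_w$ leads to the ancestor of $q_w$ at depth $\max(|w|-m,0)$, which equals $q_\epsilon$ as soon as $m\ge|w|$, in particular whenever $m\ge d$. Hence $\rho_{q_w}$ acts on a word $s_0s_1\dots$ coordinatewise, applying at position $m$ the production function of the state reached after $m$ steps, so every $g\in\pres{\auta}$ decomposes as $(g^{(0)},g^{(1)},\dots)$ with $g^{(0)}$ in $\pres{\rho_q:q\in Q}$, with $g^{(m)}$ in the group generated by the production functions of the states of depth at most $d-m$, and with $g^{(m)}=\rho_{q_\epsilon}^{\,t}$ for every $m\ge d$, where $t$ is the length of any word over the generators representing $g$. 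This realises $\pres{\auta}$ as a subdirect product of finitely many symmetric-type factors, one transient factor per relevant coordinate and one diagonally embedded $\pres{\rho_{q_\epsilon}}$ carrying all the tail coordinates; arguing on signatures exactly as in Proposition~\ref{prop:gpstruct} — the parity of the first coordinate of a group element controls, along each root-to-leaf branch, the parities of the later coordinates — then yields the deterministic containment $\pres{\auta}\le(\GA^{n-1}\times\pres{\rho_{q_\epsilon}})\rtimes P$ of the statement.

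For the converse I would first invoke Dixon's theorem (Theorem~\ref{thm:dixon}) to assume that every transient factor is $\GS$ or $\GA$; this discards only automata of vanishing probability. It then suffices, for each transient coordinate, to exhibit an element of $\pres{\auta}$ acting as a single $p$-cycle ($p$ prime, $p\le k-3$) on that coordinate and trivially elsewhere: given such an element, the several-coordinate form of Lemma~\ref{lem:conj} places all its conjugates by elements of $\pres{\auta}$ in $\pres{\auta}$, still trivial on the other coordinates; by Proposition~\ref{prop:primitive} these conjugates generate a primitive group on that coordinate; and by Jordan's theorem (Theorem~\ref{thm:jordan}) that group is $\GA$ or $\GS$, i.e. the whole factor. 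Multiplying over the transient coordinates gives $\GA^{n-1}\times\{\idp\}\le\pres{\auta}$, and feeding back the images of the generators $\rho_{q_w}$ themselves supplies the diagonal $\pres{\rho_{q_\epsilon}}$-part on the tail and the prescribed signature tuples, whence $(\GA^{n-1}\times\pres{\rho_{q_\epsilon}})\rtimes P\le\pres{\auta}$ and equality; the probability of generating each possible group is then read off from the joint distribution of the signatures, as in Theorem~\ref{thm:gene}.

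The hard part, as in Propositions~\ref{prop:qcycle} and~\ref{prop:qcyclen}, is producing the isolated prime cycle, and this is where the branching genuinely matters. To act on one coordinate only, one looks at a power $\rho_{q_w}^{\,t}$ for a well-chosen leaf $q_w$: its $i$-th coordinate is the $t$-th power of the production function of the depth-$(|w|-i)$ ancestor, so one wants $t$ to annihilate the orders of all ancestors of $q_w$ along that branch except the relevant one — which is possible precisely when that order carries a prime power not dividing the least common multiple of the orders of the other ancestors of the branch together with $\ordr{\rho_{q_\epsilon}}$ (which recurs in every tail coordinate). One then uses the cycle-splitting tricks of Proposition~\ref{prop:qcycle}, according to whether $p$ is odd or $p=2$, to descend from a power of several disjoint $p$-cycles to one, and, when a single branch does not suffice, combines two branches through Lemma~\ref{lem:conj} exactly as in Proposition~\ref{prop:qcyclen}. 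The non-degeneracy needed of the order-profile of the tree is an aperiodicity/coprimality condition generalising the primitivity hypothesis of Proposition~\ref{prop:qcyclen}; using the Gaussian limit law for the logarithm of the order of a random permutation (again as in the proof of Theorem~\ref{thm:gene}) one checks that it fails with probability tending to $0$ since $n$, hence $a$ and $d$, are fixed.
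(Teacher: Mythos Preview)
Your overall plan (upper bound via signatures, lower bound via isolated elements plus Dixon and Jordan) is sound, but you are working much harder than necessary and in doing so you miss the point of the paper's two-line proof. In the converging-tree setting, unlike the cyclic one, there is a \emph{direct algebraic} way to manufacture an element supported on a single coordinate, with no order arithmetic at all: any two sibling states $q_{vx}$ and $q_{vy}$ share the same chain of ancestors, so their associated generator-tuples agree at every position $\ge 1$, and $g_{vx}g_{vy}^{-1}=(\rho_{q_{vx}}\rho_{q_{vy}}^{-1},\idp,\idp,\dots)$ is non-trivial only at position $0$ (generically, since $\rho_{q_{vx}}=\rho_{q_{vy}}$ has probability $1/k!$). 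Conjugating by all of $\pres{\auta}$ keeps positions $\ge 1$ trivial and produces, on the first coordinate, a subgroup that is normal in $\pi_0(\pres{\auta})=\pres{\rho_q:q\in Q}$; by Dixon this projection is $\GS$ or $\GA$, whose only non-trivial normal subgroup contains $\GA$, so one already has the full $\GA$ in that coordinate. One then inducts on depth (the image on positions $\ge 1$ is the group of the automaton with its leaves stripped). This is exactly the ``Dixon's theorem and conjugation'' of the paper; it uses neither Jordan's theorem, nor Propositions~\ref{prop:qcycle}--\ref{prop:qcyclen}, nor any Gaussian limit law.

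Your route --- powers $g_{q_w}^{\,t}$ chosen to kill all but one ancestor's order, then prime-cycle extraction \`a la Proposition~\ref{prop:qcyclen} --- can presumably be pushed through, but it imports the hardest part of the cyclic argument into a situation where it is unnecessary, and the ``aperiodicity/coprimality condition generalising the primitivity hypothesis'' that you rely on is left unspecified (there is no cyclic shift here, so the word-primitivity of Proposition~\ref{prop:qcyclen} has no obvious analogue, and ``combining two branches exactly as in Proposition~\ref{prop:qcyclen}'' is not justified). The paper's approach buys both simplicity and a sharper generic event: one only needs sibling production functions to differ and finitely many Dixon events to hold, rather than divisibility constraints on the orders along every root-to-leaf branch.
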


In particular,  if the automaton $\auta$ contains a cyclic automaton as a leaf, then generically $\GA \rtimes \presc{\sgnpi{\ssp_1, \hdots, \ssp_n}}  $ is a subgroup of $\pres{\auta}$, preventing  to generate small groups. This, in addition with numerical evidences, suggest that the class of automata with cycles without exit does not present interesting behaviour regarding random generation.\\
The more general classes of automata are more delicate to tackle since the finiteness problem is not yet known, however they present much more interesting behaviour since we no longer require a tree-like structure.\medskip

Besides, determining the probability that two permutations have the same order, which would provide asymptotics for our result, seems to be an interesting problem  in its own.

\subsection*{Acknowledgements}
I would like to thank Cyril Nicaud for the initial working session that lead to this paper. I am also grateful to my PhD supervisors, Ines Klimann and Matthieu Picantin for their patient listening and the proof reading of this work, and to Charles Paperman for the encouragements and remarks. I am also grateful to the anonymous reviewers for their careful reading and comments.\\
Finally, I am thankful to the many persons I bothered with the question \textit{``what’s the probability that two random permutations have the same orders?”}, in particular Vlady Ravelomanana, Nicolas Pouyanne, Guillaume Lagarde, Stefan-Christoph Virchow, Simon R. Blackburn, John~R.~Britnell, and Mark Wildon – thanks for your help.

\bigskip
\bibliographystyle{amsplain}
\bibliography{Dixon}

\appendix
\section{Appendix}
We present the omitted proofs.  For coherence sake, we recall the statements using the same numerotation. To improve the legibility we add some intermediate results.

\propxxgpstruct*
\begin{proof}
Let $\pi$ be an arbitrary transposition  of~$\GS$. For~$\ssp \in \GS$ , we define 
\[ \hat{\sigma} = \begin{cases}
\{\ssp\pi, \pi \}  & \text{ for } \sgn{\ssp} = -1 \;,\\
\{\ssp, \idp \}  & \text{ otherwise}.
\end{cases}\] The map $\ssp \mapsto \hat{\ssp}$ is clearly a bijection from $\GS $ to $\GA \rtimes \pres{\pi}$. We can extend this bijection to tuples: $\GA^n$ is a normal subgroup of $\GS^n$ as it is a direct product of $n $ normal subgroups of $\GS$, it is also the kernel of the morphism $\text{sgn}_{\pi
}~:~\GS^n~\to~\presc{(\pi, \idp, \idp, \hdots, \idp)}$. Hence $\GS^n \simeq \GA^n \rtimes \presc{(\pi, \idp, \idp, \hdots, \idp)}$ and \begin{align*}
 \pres{\auta} \simeq & \presc{\{( \rho_0\pi^{1-\sgn{\rho_0} \over 2} , \hdots, \rho_{n-1}\pi^{1-\sgn{\rho_{n-1}} \over 2})~\sgnpi{\rho_0,\hdots \rho_{n-1}}  \}}\\
 \leq &\GA^n \rtimes \presc{\sgnpi{\rho_0,\hdots,\rho_{n-1}}}\:.
\end{align*}
\qed \end{proof}

We extend the notion of signature to tuple componentwise:  \[\sgn{\ssp_0,\hdots,\ssp_{n-1}} := (\sgn{\ssp_0},\hdots, \sgn{\ssp_{n-1}})\:.\]
\propxxunion*
\begin{proof}
Similarly to Proposition \ref{prop:subgp}, we show that  every state $q_0^i \in \auta_i$  acts like  $(\rho_{i,1}, \hdots, \rho_{i,{n_i-1}},\rho_{i,0}, \hdots, \rho_{i,{n_i-1}},\hdots \rho_{i,0}, \hdots, \rho_{i,{n_i-1}})$, a tuple of size $\lcm_I(n_i)$ of permutations of $\GS$, hence \[\pres{\auta}\leq \GA^{\lcm_I{(n_i)}} \rtimes \presc{\{\sgnpi{\rho_{i,0}, \hdots, \rho_{i,{n_i-1}}, \hdots\rho_{i,{n_i-1}}}\}_{i \in I}} \leq \GS^n\:.\] 
Consider the elements of $E$: $\presc{ \sgnpi{\rho_{i,0}, \hdots, \rho_{i,{n_i-1}} \hdots\rho_{i,{n_i-1}}}} $ is isomorphic to $\pres{\sgn{\rho_{i,0}, \hdots, \rho_{i,{n_i-1}} \hdots\rho_{i,{n_i-1}}}} \leq \presc{(1,0,\hdots,1,0,\hdots,0 )}$, where the two last groups have additive laws. So the order of~$E$ is less than the order of the group circularly generated by tuples of periods $n_i$ with only  one entry $1$ per period. This group has at most  $2^{\sum n_i}$ elements: put~$E_i  =  \presc{\sgn{\rho_{i,0}, \hdots, \rho_{i,{n_i}} \hdots\rho_{i,{n_i}}}}$ and $P_i = \presc{(1,0,\hdots,1,0,\hdots,0 )}$ (where the tuple has length $\lcm_I (n_i)$, period $n_i$ and one entry~$1$ in each periodic factor), then $\card{E_i} \leq \card{P_i} = 2^{n_i}$ and every element of $E$ is a product of elements in $E_i$, hence $\card{E}\leq \prod_i \card{E_i} \leq \prod \card{P_i} =2^{\sum{n_i}}$. In addition, tuples whose period divides both $n_i$ and $n_j$  belong to both $P_i$ and $P_j$, hence are counted twice. By the inclusion-exclusion principle on the lowest common multiples of the periods we get the result.
\qed \end{proof}

\lemxxcoprime*
\begin{proof}
By Bezout lemma there exist  $u,v \in \N$ such that $u\ordr{\ssp} + v\ordr{\ttp} =  1$.\\
 Now $\pres{\auta} \ni (\ssp,\ttp)^{u\ordr{\ssp}}  =( \ssp^{u\ordr{\ssp}}, \ttp^{1 - v\ordr{\ttp}})=(\idp,\ttp)$. Likewise we get the elements $(\idp,\ssp), (\ssp,\idp)$ and $(\ttp,\idp)$ whence $\pres{\ssp,\ttp} \times \pres{\ssp,\ttp} \leq \pres{\auta}$. The other inclusion  follows from Proposition~\ref{prop:subgp}. 
\qed \end{proof}

\propxxprimitive*
\begin{proof}
We only need to prove the second statement, which implies the first one. Recall that a primitive group is a transitive group that does not stabilise any partition of $\Sigma=\{1, \hdots, k\}$ (apart from the two  trivial ones: the singleton partition and the partition of singletons). Let us prove first that $G_\pi(\GA)$ is transitive. Let $i, j \in\Sigma$.  If the conjugacy class does not split then $(i,j,x_3, \hdots, x_p) = \pi^\rho$ for a suitable $\rho \in \GA$ and arbitrary $x_ \ell$'s, and $\pi^\rho(i)=j$. Otherwise $p \geq 5$ and  if $ (i,j,x_3, \hdots, x_p) = \pi^\rho$,with  $\rho \in \GS \setminus  \GA$ is not in the same conjugacy class as $\pi$ then $ (i,j,x_3, \hdots, x_p,x_{p-1}) = (\pi^\rho)^{(p-1,p)} = \pi^{\rho (p-1,p)}$ is in the same conjugacy class since $\rho{(p-1,p)} \in \GA$. Hence $G_\pi(\GA)$ is transitive.\\
For primitivity, consider a partition $\Sigma_1,\hdots, \Sigma_a$ of $\Sigma$ such that $i,j \in \Sigma_1$ and $\ell\not\in \Sigma_1$. Then consider the cycle $\pi^\rho=(i,j,x_3,\hdots, x_{p-1},\ell)$: as $\pi^\rho(i)=j \in \Sigma_1$ and $\pi^\rho(\ell)=i \in \Sigma_1$,  the partition is not preserved. For $\rho \in\GA$ we get the result. Otherwise $\rho(1,2) \in \GA$, so $(j,i,x_3,\hdots, x_{p-1},\ell) \in G_\pi(\GA)$ and the same holds.
\qed \end{proof}

Let $\dualp{{\color{white}{\ssp}}} : \{\ssp,\ttp\}^* \to \{\ttp,\ssp\}^*$ be the  involutory morphism defined by $\dualp{\ssp} = \ttp$. Then for $\rho \in \{\ssp,\ttp\}^*$,  $(\rho, \dualp{\rho}) \in \pres{\auta}$ (after identifying words on $\{\ssp,\ttp\}$ with permutation in $\GS$).\medskip

\lemxxconj*
\begin{proof}
Since $(\dualp{\rho},\rho) \in \pres{\auta}$ and $\dualp{\rho^{-1}} = \dualp{\rho}^{-1} $, we have $(\dualp{\rho},\rho)^{-1}(\idp,\pi)(\dualp{\rho},\rho) = (\idp,\pi^\rho)$. 
\qed \end{proof}

To prove Proposition~\ref{prop:qcyclen} we first present the case of three permutations, then generalise to an arbitrary number of them.

\begin{restatable}{proposition}{propxxqcyclethree}
\label{prop:qcycle3}
	Let $\ssp_0$, $\ssp_1$ and $\ssp_2$  be three permutations of $\GS$, $k \geq 7$, that do not all have the same order and such that $\pres{\ssp_0,\ssp_1,\ssp_2}$ is either $ \GS$ or $\GA$.  Then there exists a prime-sized cycle $\pi$ satisfying $(\idp,\idp,\pi) \in  \presc{(\ssp_0,\ssp_1,\ssp_2)}$.
\end{restatable}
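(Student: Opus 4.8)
The plan is to reduce the three-permutation case to the two-permutation case already handled in Proposition~\ref{prop:qcycle}, by first producing from the tuple $(\ssp_0,\ssp_1,\ssp_2)$ an element of the form $(\idp,\idp,\hat\ttp)$ or $(\idp,\hat\ttp,\ast)$ whose nontrivial coordinates have a workable order structure. Let me think about how the circular generation behaves here: since the automaton has $3$ states, $\pres{\auta} = \presc{(\ssp_0,\ssp_1,\ssp_2)}$ is generated by the three cyclic shifts $(\ssp_0,\ssp_1,\ssp_2)$, $(\ssp_1,\ssp_2,\ssp_0)$, $(\ssp_2,\ssp_0,\ssp_1)$, and products/powers thereof. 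The first thing I would do is dispose of the easy sub-case where two of the three orders, say $\ordr{\ssp_0}$ and $\ordr{\ssp_1}$, are equal but $\ordr{\ssp_2}$ differs; then I want to kill the first two coordinates simultaneously. Writing $d = \gcd(\ordr{\ssp_0},\ordr{\ssp_2})$ (which also handles $\ssp_1$ since $\ordr{\ssp_1}=\ordr{\ssp_0}$), and raising suitable shifts to the powers that zero out the coordinates whose order is coprime to a chosen prime $p \mid \ordr{\ssp_2^{d}}$, exactly as in the proof of Proposition~\ref{prop:qcycle}, should yield an element $(\idp,\idp,\check\ttp)$ with $\check\ttp$ of order $p$. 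Then, if $\check\ttp$ is a single $p$-cycle we are done; otherwise the same case analysis on $p=2$ versus $p\neq 2$ from Proposition~\ref{prop:qcycle} (using conjugation, which is available on the third coordinate via Lemma~\ref{lem:conj} applied to the three-coordinate analogue of $\dualp{{}}$) extracts a genuine prime-sized cycle $\pi$ with $(\idp,\idp,\pi)\in\presc{(\ssp_0,\ssp_1,\ssp_2)}$.

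The general case is when all three orders are pairwise distinct, or more precisely when no two adjacent orders in the cyclic order coincide after the relevant gcd-reductions. Here I would choose a prime $p$ dividing $\lcm(\ordr{\ssp_0},\ordr{\ssp_1},\ordr{\ssp_2})$ to some maximal power $p^a$, and aim to find a cyclic shift $(\ssp_{i},\ssp_{i+1},\ssp_{i+2})$ in which $p$ divides the order of exactly one coordinate. If such an index $i$ exists, raising this shift to the power $\frac{L}{p}$, where $L$ is the lcm of the (suitably pre-powered) orders, zeroes out the two coordinates whose order is coprime to $p$ and leaves a permutation of order $p$ on the third, and we finish as before. The content of the proof is therefore a combinatorial observation: if the multiset $\{\ordr{\ssp_0},\ordr{\ssp_1},\ordr{\ssp_2}\}$ is not constant, then there is a prime $p$ and a cyclic rotation in which $p$ divides exactly one of the three orders — after first replacing each $\ssp_i$ by $\ssp_i^{d}$ with $d$ the overall gcd, so that at least one order becomes coprime to at least one relevant prime.

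The step I expect to be the main obstacle is precisely guaranteeing the existence of a prime $p$ that divides the order of \emph{exactly one} (not two) of the three coordinates in \emph{some} rotation. Because $3$ is the number of states, a prime could a priori divide two of the three orders, and then whichever rotation we pick leaves two nontrivial coordinates, so the single-coordinate reduction fails directly. The remedy I would pursue: pass to powers. If $p$ divides $\ordr{\ssp_0}$ and $\ordr{\ssp_1}$ but (after the gcd-reduction) to different maximal $p$-powers, say $p^{a}\parallel \ordr{\ssp_0}$ and $p^{b}\parallel\ordr{\ssp_1}$ with $a>b$, then raising the tuple to the power $p^{b}$ (times the part needed on the other coordinate) makes the $\ssp_1$-coordinate's order coprime to $p$ while keeping it nontrivial on $\ssp_0$; one then iterates. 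The case that genuinely resists is when $p^{a}\parallel\ordr{\ssp_i}$ with the \emph{same} $a$ for two indices $i$ — then no power of the tuple separates them on the coordinate level, and one must instead multiply two different rotations, mimicking the ``$\tilde\ttp$, $\dot\ttp$'' conjugation tricks of Proposition~\ref{prop:qcycle} but now carried out on tuples, exploiting that $\pres{\ssp_0,\ssp_1,\ssp_2}=\GS$ or $\GA$ to realize the needed conjugates of the $p$-power parts independently on each coordinate. Carefully checking that these tuple-level conjugations land inside $\presc{(\ssp_0,\ssp_1,\ssp_2)}$ — via the $3$-state version of Lemma~\ref{lem:conj}, i.e. conjugating $(\idp,\idp,\check\ttp)$ by $(\dualp\rho,\dualp{\dualp\rho},\rho)$-type elements — is where the real work lies, and a small residual list of exceptional order-patterns (analogous to the six cases in Theorem~\ref{thm:2}, now tied to $k-2,k-1,k$ being prime) will need to be treated by hand.
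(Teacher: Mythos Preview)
Your decomposition into ``a prime $p$ divides exactly one of the three orders'' versus ``$p$ divides exactly two'' is the right one, and in the first case the reduction to Proposition~\ref{prop:qcycle} is fine. The gap is in the second case. You propose to ``realize the needed conjugates of the $p$-power parts independently on each coordinate'', but this is precisely what is \emph{not} available a priori: conjugating $(\idp,\ttp_1,\ttp_2)$ by an element $(w_0,w_1,w_2)\in\presc{(\ssp_0,\ssp_1,\ssp_2)}$ moves the second and third coordinates in a correlated way, and there is no $3$-state analogue of Lemma~\ref{lem:conj} until \emph{after} one has produced an element of the form $(\idp,\idp,\pi)$---which is the goal. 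The conjugation tricks of Proposition~\ref{prop:qcycle} therefore let you turn the second coordinate into a single $p$-cycle $\rho_1$, but they leave an uncontrolled permutation $\alpha$ of $p$-power order on the third coordinate: you reach $(\idp,\rho_1,\alpha)$, not $(\idp,\rho_1,\idp)$, and no further conjugation or $p$-power will kill $\alpha$ without also killing $\rho_1$.

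The paper's resolution, which your plan does not reach, is a prime-switching step. By circularity one also has $(\beta,\rho_2,\idp)$ with $\rho_2$ a $p$-cycle and $\ordr{\beta}$ a power of $p$. Since the projection of $\presc{(\ssp_i)_i}$ onto any single coordinate is all of $\GS$ or $\GA$, the $p$-cycles $\rho_1,\rho_2$ can be chosen freely in their conjugacy class; pick them so that $\rho_1\rho_2$ is a $3$-cycle (a $5$-cycle when $p=3$). The product $(\beta',\rho_1\rho_2,\alpha')$ then has middle coordinate of order coprime to $p$ while the outer coordinates still have $p$-power order, so raising to a suitable $p$-power yields $(\idp,\hat\pi,\idp)$ with $\hat\pi$ a $3$- or $5$-cycle. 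This change of prime is the missing idea; without it the argument stalls at $(\idp,\rho_1,\alpha)$.

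A minor point: the ``residual list of exceptional order-patterns'' tied to $k-2,k-1,k$ being prime does not belong here. Those arise in the proof of Theorem~\ref{thm:2} because Jordan's theorem needs $p\le k-3$; Proposition~\ref{prop:qcycle3} only asks for \emph{some} prime-sized cycle, with no bound on its length.
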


\begin{proof}
Since the orders are different, there exist  a prime $p$, an integer $c$, and a non-trivial subset of  $\{{\ssp_i}\}_{i \in \{0,1,2\}}$ such that $p^c$ divides each order of the elements in that subset. If it divides  the order of only one permutation, we can apply the argument of Proposition~\ref{prop:qcycle}. Otherwise, we can obtain, by taking the suitable power of a generator, the tuple $(\idp,\ttp_1,\ttp_2)$ (and  $(\ttp_1,\ttp_2,\idp)$ by circularity). As in Proposition~\ref{prop:qcycle} we can  create,   by conjugation and multiplication,  $(\idp,\rho_1,\alpha)$ (\resp $(\beta,\rho_2,\idp)$) for some $p$-cycle $\rho_1$ (\resp $\rho_2$). Moreover we can get that $\ordr{\alpha}$ (\resp  $\ordr{\beta}$) is $p^a$ for some integer $a$ (\resp $p^b$ for some $b$), so we have $(\idp,\rho_1,\alpha_{\rho_1})$ (\resp $(\beta_{\rho_2},\rho_2,\idp)$) for any $p$-cycle $\rho_1$ (\resp $\rho_2$) in a certain conjugacy class under $\pres{\ssp_0,\ssp_1,\ssp_2}$. Then we can multiply these tuples and obtain $(\beta_{\pi},\pi,\alpha_{\pi})$, where $\pi$ is a cycle of size $3$ (\resp $5$ for $p=3$), and the orders of $\alpha_{\pi}$ and $\beta_{\pi}$ are powers of $p$. Then  $(\beta_{\pi},\pi,\alpha_{\pi})^{\ordr{\alpha_{\pi}}\ordr{\beta_{\pi}}}= (\idp,\hat\pi,\idp)$, where $\hat\pi$ is a $3$-cycle (\resp $5$-cycle), thus by conjugation (\resp multiplication and conjugation) we can get all $3$-cycles, and finally able to generate $\GA$. The result follows.
\qed \end{proof}

\propxxqcyclen*
\begin{proof}
Since the tuple of orders is primitive there exists a prime  $p$ and an integer $c$ such that $p^c$ divides every element of a non-trivial subset of $ \{\ordr{\ssp_i}\}_{i \in \{0,\hdots, n-1\}}$. Hence we get a tuple of permutations of orders $p$ or $1$. We can assume that each non-trivial permutation is surrounded by ones: consider $(\alpha_0,\hdots, \alpha_a, \idp, \pi_0,\hdots,\pi_s,\idp,\beta_0,\hdots,\beta_b )$ where the $\pi$'s have orders $p$ and $\pi_1$ is a $p$-cycle, and its permuted-conjugated  $(\beta'_b,\alpha'_0,\hdots, \alpha'_a, \idp, \pi'_0,\hdots,\pi'_s,\idp,\beta'_0,\hdots,\beta'_{b-1} )$  where the $\pi'$'s have orders $p$ and $\pi'_0$ is a $p$-cycle such that $\pi_1\pi'_0$ has order $r$, with $r$ a prime different from $p$, and $\ordr{\alpha_i}=\ordr{\alpha'_i}$ and $\ordr{\beta_i}=\ordr{\beta'_i}$. By multiplying these two tuples we get 
{\scriptsize { \[\Delta\! =\!(\alpha_0\beta'_b,\alpha_1\alpha'_0,\hdots, \alpha_a\alpha'_{a-1}, \alpha'_a,\pi_0,\pi_1\pi'_0,\hdots,\pi_s\pi'_{s-1},\pi'_s,\beta_0, \beta_1\beta'_0,\hdots, \beta_{b-1}\beta'_b)\:.\]}}
We remark that if a permutation $\alpha_i$ is trivial, then the corresponding coordinate in $\Delta$ has order $p$. Then by taking the suitable power we get a tuple $(\gamma_0,\hdots,\gamma_{a-1},\idp,\idp,\rho_0,\hdots,\rho_{s-1},\idp,\idp,\delta_0,\hdots,\delta_{b-1})$, where trivial coordinates remain trivial and every non-trivial permutation has order~$r$, hence, by induction, we construct a tuple of permutations of prime order~$r$, where each non-trivial permutation is surrounded by trivial permutations (note that if the original tuple is formed by one trivial permutation and a block of permutations of orders~$p$, then using techniques similar to the one in Proposition~\ref{prop:qcycle3} we reduce to this situation).This tuple might be non-primitive, but we can construct such a tuple for each prime greater than~$2$.\\
We now consider two cases:  if  $\exists \Pi=(\ssp_0,\hdots,\ssp_{n-1})^d=(\pi_0,\hdots,\pi_{n-1})$ a tuple with $\ordr{\pi_i} \in \{1,p\}$, $p$ prime, such that the tuple of orders is primitive, then  let $ \Gamma$ be a non-trivial tuple with the orders of the coordinate either $1$ or $p$, and where two non-trivial permutations are consecutive (as constructed above). Since $\Pi$ is aperiodic we can find a configuration were two non-trivial permutations of $\Gamma$ face respectively a trivial permutation and a non-trivial one. Thus by multiplying and taking the suitable power, we get  $\Gamma_1$ having the same properties as $\Gamma$ and with strictly less non-trivial permutations. Hence we obtain $\Gamma_{\infty}\in \pres{\auta}$ which contains exactly one $p$-cycle and trivial permutations.\\
If no such $\Pi$ exists then, since the tuple of orders is aperiodic, there are two tuples $\Pi_1, \Pi_2$  having tuples of orders with non-multiple periods $t_1, t_2$ and such that the lowest common multiple of their periods is greater does not divide $n$ (or equivalently is greater than $n$). Then can construct $\Gamma$ as above for $\Pi_1$. Then, either $\Gamma$ is aperiodic or $t_1$ does not divide its period, or $t_1$ divides its period. In the first case we can obtain a tuple $\Gamma_1$ having strictly less non-trivial permutations by multiplying $\Gamma$ and $\Pi_1$ and taking the suitable power. In the second case $t_2$ does not divide the period of $\Gamma$ and we apply the same argument using $\Pi_2$ instead of $\Pi_1$. By induction of this procedure we obtain $\Gamma_{\infty}$ as described above.
\qed \end{proof}

\propxxlinear*
\begin{proof}
By definition of the automaton, $q$ induces the transformation $(\rho_q, \rho_{q+1},\hdots, \rho_{n-1},\hdots, \rho_{n-1},\hdots)$ on $\Sigma^*$. Hence \[\pres{\auta} \simeq  \pres{(\rho_0,\hdots,\rho_{n-1}),(\rho_1,\hdots,\rho_{n-1},\rho_{n-1}),\hdots,(\rho_{n-1},\hdots,\rho_{n-1})}\:,\] whence the result by Dixon's theorem and conjugation.
\qed \end{proof}
Hence the group has size $k!^{n-1}\times \ordr{\rho_{n-1}}/2^{(\min_i{( i | \{\sgn{\rho_{n-1-i}}\}_i = \{-1,1\}}) -1)}$
In the same spirit, if the automaton is the disjoint union of linear automaton, then it is generically the direct product of symmetric or alternating groups.

\propxxconverging*
\begin{proof}
By definition of the automaton, $q_{u_0u_1\hdots u_i}$ induces the transformation $(\rho_{q_{u_0u_1\hdots u_i}}, \rho_{q_{u_0u_1\hdots u_{i-1}}},\hdots, \rho_{q_\epsilon},\hdots, \rho_{q-{\epsilon}},\hdots)$ on $\Sigma^*$. Whence the result by Dixon's theorem and conjugaison.
\qed \end{proof}
\end{document}